\documentclass[reqno,10pt]{amsart} 
\parindent0pt 
\usepackage{amsmath,amsthm} 
\usepackage{amssymb}
\usepackage{hyperref} 
\setlength{\parskip}{4pt} 
\setlength{\textwidth}{6.5true in} 
\setlength{\textheight}{9 truein} 
 \setlength{\hoffset}{-0.8in}
\setlength{\voffset}{-.8in} 
\usepackage{mathabx}
\newtheorem{Theorem}{Theorem} 
\newtheorem{lemma}[Theorem]{Lemma} 
\newtheorem{remark}[Theorem]{Remark}

\numberwithin{equation}{section}
\font\ff=cmsy10 
\def\tiF{\text{\ff F\kern 0pt}{\;}^{ -1}} 
\def\tF{\text{\ff F\kern 0pt}} 

\begin{document} 
\title[]{Unique continuation principle for high order\\ equations of Korteweg-de Vries type} 
\author{Pedro Isaza}  
\thanks{With the support of  DIME, Universidad Nacional de Colombia, Sede Medell'n} 
\subjclass[2000]{35Q53, 37K05}
  
\keywords{Nonlinear dispersive equations, unique continuation, estimates of Carleman type} 
\address{Pedro Isaza J. \newline  
Departamento de Matem\'aticas\\Universidad Nacional de Colombia\newline  
A. A. 3840 Medell\'{\i}n, Colombia} 
\email{pisaza@unal.edu.co}
\newcommand{\ldos}{{L^2(\mathbb R)}}
\newcommand{\h}[1]{{H^{#1}(\mathbb R)}}
\newcommand{\eb}{e^{\beta x}}
\newcommand{\el}{e^{\lambda x}}
\newcommand{\liTldx}{{{}_{L_T^\infty L_x^2}}}
\newcommand{\lixldT}{{{}_{L^\infty_x L^2_T}}}
\newcommand{\luxldT}{{{}_{L^1_x L^2_T}}}
\newcommand{\ldxliT}{{{}_{L^2_x L^\infty_T}}}
\newcommand{\ldxldT}{{{}_{L^2_x L^2_T}}}
\newcommand{\ldTldx}{{{}_{L^2_T L^2_x}}}
\newcommand{\ldTlix}{{{}_{L^2_T L^\infty_x}}}
\newcommand{\luTldx}{{{}_{L^1_T L^2_x}}}
\newcommand{\litldx}{{{}_{L_t^\infty L_x^2}}}
\newcommand{\lixldt}{{{}_{L^\infty_x L^2_t}}}
\newcommand{\luxldt}{{{}_{L^1_x L^2_t}}}
\newcommand{\lutldx}{{{}_{L^1_t L^2_x}}}
\newcommand{\ldosD}{{L^2(D)}}
\newcommand{\R}{{\mathbb R}}
\newcommand{\vp}{{\varphi}}
\newcommand{\e}{{\varepsilon}}
\newcommand{\N}{{\scriptscriptstyle{N}}}
\newtheorem*{TI}{Theorem I}
\newtheorem*{TII}{Theorem II}

\begin{abstract} 
In this article we consider the problem of  unique continuation a class of high order equations of Korteweg-de Vries type
which include the kdV hierachy. It is proved that if the difference $w$ of two solutions  of an equation of this form has certain exponential decay for $x>0$ at two different times, then $w$ is identically zero.

\end{abstract} 
\maketitle

\section{Introduction}
This article is concerned with a unique continuation principle for the equation\begin{equation} 
\partial _{t}u+ (-1)^{k+1}\partial^ {n}_{x}u+P(u, \partial _{x}u, \cdots, \partial_x^{p} u) =0\,,\quad u=u(x,t),\quad x,t\in\mathbb R,\label{ec}
\end{equation}where $n=2k+1$, $k=1,2\cdots$, and $P$ is a polynomial in $u, \partial_xu,\cdots,\partial^{p}_xu$, with $p\leq{n-1}$.
In particular, we will focus our attention to the case in which  $P$ has the form 
\begin{equation} P(u, \partial _{x}u, \cdots, \partial_x^{n-2} u)=\sum_{d=2}^{k+1}\kern-50pt\sum_{\kern 50pt |{ m}|=2(k+1-d)+1}\kern-50pt a_{d,m}\,\partial_x^{m_1}u\cdots\partial_x^{m_d}u\equiv\sum_{d=2}^{k+1}A_d(z),\quad z=(u,\partial_xu,\cdots,\partial_x^{n-2}u),\label{poli1}
\end{equation}
where,  for $d\in\mathbb N$  and  for integers $m_1,\cdots,m_d$, ${m}:=(m_1,\cdots,m_d)$ is a multiindex with $0\leq m_1\leq\cdots\leq m_d$, $|{ m}|:=m_1+\cdots +m_d$, and $a_{d,m}$ is a constant. We will refer to equation \eqref{ec} with $P$ as in \eqref{poli1} as equation \eqref{ec}-\eqref{poli1}. Besides, we will  also be considering equation \eqref{ec} when the nonlinearity $P$ has order $p\leq k$.

The type of relation expressed in \eqref{poli1}, between the degree and the order of each monomial of $P$,  is present in the nonlinearities of the collection of equations known as the KdV (Korteweg-de Vries) hierarchy. This set of equations was introduced by Lax in \cite{L1}  in the process 
to determine the functions $u=u(x,t)$ for which the eigenvalues of the operator $L:=\frac{d^2}{dx^2}-u(\cdot,t)$ remain constant as $t$ evolves. This property had been already discovered  by Gardner et al. in \cite{G1}  for the solutions of the Korteweg-de Vries equation
$$\partial_tu+\partial^3_xu+u\partial_xu=0.$$
 Lax showed that this property holds for the solutions of the equations
\begin{equation}\partial_tu+[B_k(u),L]=0,\label{kdvH}\end{equation}
where $[B,L]:=BL-LB$ denotes the commutator of $B$ and $L$, and $B_k(u)$ is the skew-adjoint operator defined by
$$B_k(u)=b_k\frac{d^{2k+1}}{dx^{2k+1}}+\sum_{j=0}^{k-1}b_{k,j}(u)\frac{d^{2j+1}}{dx^{2j+1}} +\frac{d^{2j+1}}{dx^{2j+1}}b_{k,j}(u),$$
with the coefficients $b_{k,j}(u)$ chosen in such a way that the operator $[B_k(u),L]$ has order zero. It was proved in \cite{L2}
that the equations in the KdV hierarchy \eqref{kdvH} can be written in the form $\partial_tu+\partial_xG_{k+1}(u)=0$, were the functions $G_k(u)$ are the gradients of the functionals $F_k(u)$ which define the conservation laws of the KdV equation. The gradients $G_k$ satisfy the following recursion formula due to Lenard (see \cite{G2} and \cite{P}):
$$\partial_xG_{k+1}=cJG_k\,,\quad\text{where } J=\partial_x^3+\frac23u\partial_x+\frac13 \partial_xu\,.$$
This formula can be applied  to obtain a derivation of the  equations in the hierarchy. Starting with $G_0(u)=3$, with $k=0$ we get the transport equation, with $k=1$ the KdV equation, and, with $k=2$,  $k=3$, and $k=4$,  we respectively find  the equations

\begin{equation}\partial_tu+\partial_x^5u-10u\partial_x^3u-20\partial_xu\partial^2_xu+30u^2\partial_xu=0,\label{fifthorder}\end{equation}

$$\partial_tu+\partial_x^7u+14u\partial_x^5u+42\partial_xu\partial^4_xu+70\partial_x^2u\partial_x^3u+70u^2\partial^3_xu+280u\partial_xu\partial^2_xu+70(\partial_xu)^3+140u^3\partial_xu=0.$$

$$\partial_tu+\partial^9_xu +\sum_{\substack{m_1+m_2=7\\0\leq m_1\leq m_2}}\kern-5pt a_{2,m}\,\partial^{m_1}_xu\partial_x^{m_2}u+ \sum_{\substack{m_1+m_2+m_3=5\\0\leq m_1\leq m_2\leq m_3}} \kern-10pt a_{3,m}\,\partial^{m_1}_xu\partial_x^{m_2}u\partial_x^{m_3}u+\cdots +a_{5,m}u^4\partial_xu=0,$$
for certain constants $a_{d,m}$, with $d=2,\cdots, 5$ and  $|m|=2(5-d)+1$.

In spite of  computational difficulties, it is possible to obtain exact expressions for all the equations in the hierachy (see \cite{A}). However, following a simple procedure, and without obtaining the explicit values for the coefficients, it can be proved  (see \cite{Gr}) that the  equations in the KdV hierachy \eqref{kdvH} have the form of \eqref{ec}-\eqref{poli1}.  When $k$ is even we have made the change of variable $x\mapsto -x$ and thus the linear term $\partial^{n}_xu$ has been transformed into  $(-1)^{k+1}\partial^{n}_xu$ in \eqref{ec}. 

The aspects of local and global well-posedness of the initial value problem (IVP) associated with the general equation \eqref{ec} have been considered in  \cite{KPV3} and \cite{KPV4}, where Kenig, Ponce, and Vega proved that the (IVP) is  locally well-posed in  weighted spaces 
$H^s(\mathbb R)\cap L^2(|x|^m\,dx)$ if $s\geq s_0(k)$, for some $s_0(k)$ and some integer $m=m(k)$. 

For the (IVP) associated to  \eqref{ec}-\eqref{poli1}, in \cite{S}, Saut proved the existence of global solutions  for initial data in Sobolev spaces $H^m(\mathbb R)$ for $m\geq k$,  integer. By using a variant of Bourgain
spaces, in \cite{Gr}, Gr\"unrok proved the local well-posedness for the (IVP) of equation \eqref{ec}-\eqref{poli1} in the context of  the spaces 
$$\widehat {H}_{s}^r(\mathbb R)
:=\{f\mid \|f\|_{s,r}:=\|(1+\xi^2)^{s/2}\widehat{f}(\xi)\|_{L_{\xi}^{r'}}<\infty\},\quad \text{with } r\in(1,{\textstyle{\frac{2k}{2k-1}}}], \; \text{and }s>k-{\textstyle\frac32-\frac1{2k}+\frac{2k-1}{2r'}}.$$
Here $\widehat{\;}$ denotes the Fourier transform and $1/r+1/r'=1$.
We also refer to the articles \cite{Po}, \cite{Li}, \cite{Pi},  \cite{Kw}, which, among others, consider the problem of well-posedness for high order equations of KdV-type and especially for the equations of order five ($k=2$).

Our main goal is to prove  continuation principles  for the equations  \eqref{ec}-\eqref{poli1} with $n\geq 5$, which include the KdV hierarchy,  and for the equations \eqref{ec} with $n\geq 5$ and $p\leq k$. Roughly speaking, we will prove that if the difference $w:=u_1-u_2$ of two sufficiently smooth solutions of  equation \eqref{ec}-\eqref{poli1} decays as $exp({-x_+^{4/3^+}})$ at two different times, then $w\equiv 0$. (Here $x_+:=\frac12({x+|x|})$, and $4/3^+$ means $4/3+\epsilon$ for arbitrarily small $\epsilon>0$). For \eqref{ec} with $p\leq k$ we have a similar result if $w$ decays as $exp({-ax_+^{n/(n-1)}})$ for  $a>0$ sufficiently large at two different times. 
This last result is coherent with the decay $exp({-cx_+^{n/(n-1)}})$ of the fundamental solution of the linear problem associated with equation \eqref{ec} (see \cite{SSS}). When the nonlineariy $P$ has higher order as in \eqref{ec}-\eqref{poli1}, it is then necessary to impose a stronger decay on $w$.
 
 The aspect of unique continuation  has been studied for a variety of non-linear dispersive equations, and especially for the KdV and Schr\"odinger equations. In \cite{SS}, Saut and Sheurer considered a class of nonlinear dispersive equations, which  includes the KdV equation, and proved that if a solution $u$ of one of such equations vanishes in an open set $\Omega$ of the space-time space, then  $u$ vanishes in all horizontal components of $\Omega$, that is, in the set $\{(x,t)\mid \exists\, y \;\text{with } (y,t)\in\Omega\}$. 
 
 By using methods of complex analysis, in \cite{B}, Bourgain proved that if a solution $u$ of the KdV equation is supported in a compact set $\{(x,t)\mid -B\leq x\leq B,\; t_0\leq t\leq t_1\}$, then $u$ vanishes identically.  
 
 In \cite{KPV1}, Kenig, Ponce and Vega, considered a solution of the KdV equation which vanishes only in two half lines $\mathbb [B,+\infty)\times\{t_0\}$  and $\mathbb [B,+\infty)\times\{t_1\}$, and proved that this solution must be identically zero. A similar result was proved in \cite{KPV2} for the difference $w=u_1-u_2$ of two solutions of the KdV equation. In \cite{EKPV1}, Escauriaza et al.  refined this result by only imposing the condition that  $w(\cdot,t_0)$ and $w(\cdot,t_1)$ decay as ${exp}({-ax_+^{\gamma}})$, for $\gamma=3/2$ and $a>0$ sufficiently large, together with a suplementary hypothesis of polynomial decay for $u_1$ and $u_2$. This result is obtained by  applying two types of estimates for the function $w$: Carleman type estimates, which express a boundedness of the inverse of the linear operator $\partial_t+\partial_x^3$  in $L^p-L^q$-spaces with exponential weight; and a so-called lower estimate which bounds the $L^2$-norm of $w$ in a small rectangle at the origin with the $H^2$-norm of $w$ in a distant 
 rectangle $[R,R+1]\times[0,1]$.
  
For the fifth order equation \eqref{ec} ($k=2$), in \cite{D}, Dawson proved a result similar to that in \cite{EKPV1} with $\gamma=4/3^+$ for the general case $p\leq n-1=4$, and with $\gamma=5/4$ for the case $p\leq 2$.

In this article we consider  equations \eqref{ec} and \eqref{ec}-\eqref{poli1} with arbitrary order $n$   and prove the continuation principles stated in Theorems I and II below. For that, we follow the method traced  in \cite{EKPV1}.  The  greatest difficulty in this process is to manage  the huge amount of terms arising in  the computations  of the operators involved in the lower estimate. We consider that the main contribution of our work is the presentation of  a clear and organized procedure to obtain the lower estimate (see Lemma \ref{lema3} and Theorem \ref{estinferior}). 

We now state our main results:
\begin{TI}\label{principal}
For  odd $n\geq 5$ ,  $k=(n-1)/2$,  and $\alpha>\frac{n+1}3$, suppose that $u_1,\,u_2\in C([0,1];H^{n+1}(\mathbb R)\cap L^2((1+x_+)^{2\alpha}\,dx))$ are two solutions of  the equation
\begin{equation}\partial _{t}u+ (-1)^{k+1}\partial^ {n}_{x}u+P(u, \partial _{x}u, \cdots, \partial_x^{n-2} u) =0\label{Lax}\end{equation}
with $P$ as in \eqref{poli1},
and let $w:=u_1-u_2$.  If
\begin{equation}w(0), w(1)\in L^2(e^{2x_+^{4/3+\epsilon}}dx)\label{w}\end{equation}
for some $\epsilon>0$, then $w\equiv0$.
\end{TI}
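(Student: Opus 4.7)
The plan is to follow the scheme of Escauriaza, Kenig, Ponce, and Vega in \cite{EKPV1}: combine a Carleman-type upper estimate for the linear operator $L_0 := \partial_t + (-1)^{k+1}\partial_x^n$ with a lower estimate for nontrivial solutions. I first write the equation satisfied by the difference $w = u_1-u_2$, which, after subtracting the two copies of \eqref{Lax}, takes the form
\begin{equation*}
\partial_t w + (-1)^{k+1}\partial_x^n w + \sum_{j=0}^{n-2} a_j(x,t)\, \partial_x^j w = 0,
\end{equation*}
with coefficients $a_j$ that are polynomials in $\partial_x^i u_1,\,\partial_x^i u_2$ for $i \le n-2$. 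The structural condition \eqref{poli1} on the degree and order of each monomial in $P$ guarantees that these coefficients behave well at $+\infty$: with the polynomial decay $u_\ell \in L^2((1+x_+)^{2\alpha}\,dx)$ for $\alpha>(n+1)/3$, the $a_j$ (and the commutators they produce with the exponential weight $e^{\lambda x_+^{4/3+\e}}$) can be absorbed in the error terms of the later estimates.

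The second step is a \emph{persistence of decay}: from the hypothesis \eqref{w} at $t=0$ and $t=1$, deduce that $w\in L^\infty([0,1];L^2(e^{2x_+^{4/3+\e}}dx))$. This is standard, using weighted energy estimates against truncated weights $\vp_N(x)$ converging to $x_+^{4/3+\e}$, together with the log-convexity in $t$ of the resulting weighted norm. With persistence in hand, the heart of the argument is a pair of dual inequalities. A Carleman-type upper bound
\begin{equation*}
\bigl\|e^{\lambda \vp} g\bigr\|_{L^q_t L^p_x} \le C \bigl\|e^{\lambda \vp} L_0 g\bigr\|_{L^{q'}_t L^{p'}_x},
\end{equation*}
valid for $\lambda$ large and a suitable weight $\vp=\vp(x,t;\lambda)$ modelled on $x_+^{4/3+\e}$, is obtained by conjugating $L_0$ with $e^{\lambda\vp}$, splitting into skew-adjoint and self-adjoint parts, and exploiting the smoothing of the $n$-th order linear KdV-type flow. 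Dually, the lower estimate announced in the introduction (Lemma~\ref{lema3} and Theorem~\ref{estinferior}) provides a matching lower bound of the form $(\int_0^1\!\!\int_R^{R+1}|w|^2\,dx\,dt)^{1/2} \ge c\,e^{-cR^{4/3+\e}}$ for a nontrivial solution of the linear equation. Applying the Carleman estimate to $\theta\, w$ for a cutoff $\theta$ supported where $x$ is large, controlling the right-hand side via the persistence bound and the coefficients $a_j$, and contrasting with the lower bound forces $w\equiv 0$ after sending $\lambda \to \infty$.

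The hard part, as the author himself emphasizes, is the lower estimate. After conjugation, $e^{\lambda\vp}L_0 e^{-\lambda\vp}$ produces a proliferation of terms from the Leibniz expansion of $\partial_x^n(e^{-\lambda\vp}\,\cdot\,)$; these must be separated into a symmetric part $S$ and an antisymmetric part $A$ so that the commutator $[S,A]$ is positive modulo manageable errors, and then bounded by an organized bookkeeping of powers of $\lambda$, factors of $\vp',\vp'',\ldots$, and $x$-decay. The threshold exponent $4/3+\e$ is precisely what makes this positive commutator dominate the contribution of the polynomial nonlinearity of type \eqref{poli1} under the hypothesis $\alpha>(n+1)/3$; once a clean framework for the combinatorics is set up (essentially the content of the cited lemma and theorem), the rest of the argument reduces to the standard Carleman--lower-bound contradiction of \cite{EKPV1}.
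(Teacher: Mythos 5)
Your overall architecture (persistence of decay, Carleman upper bound, lower estimate, contradiction) is the same as the paper's, but two of the three pillars are asserted in a form that either fails or is not justified. First, the persistence step: you claim that \eqref{w} propagates to $w\in L^\infty([0,1];L^2(e^{2x_+^{4/3+\e}}dx))$ via truncated weights and ``log-convexity.'' The truncated-weight energy method cannot give this: for $\vp=e^{\phi}$ with $\phi=x_+^{4/3+\e}$ one has $\vp''/\vp'=\phi''/\phi'+\phi'\to\infty$, so the crucial inequalities $|\vp^{(j)}|\leq C\vp'$ and $\vp'\leq C\vp$ (conditions \eqref{condfi} in the paper) fail and the Gronwall step does not close; and a log-convexity argument for the $n$-th order operator is not supplied. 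The point is that this stronger persistence is also \emph{not needed}: the paper's Theorem \ref{decaimiento} propagates only the linear exponential weights $e^{\beta x}$ (for every $\beta>0$, which \eqref{w} supplies at $t=0$), and that suffices because in the final step the Carleman inequality is applied with the \emph{linear} weight $e^{\lambda x}$, $\lambda=aR^{1/3+\e/2}$, and the full $x_+^{4/3+\e}$ decay is invoked only at $t=0,1$ where it is hypothesized.

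Second, your Carleman estimate is stated with a convex superlinear weight $e^{\lambda\vp}$, $\vp\sim x_+^{4/3+\e}$, in dual $L^qL^p$ norms. That is a genuinely harder object than what the paper proves and you give no route to it for $\partial_t+(-1)^{k+1}\partial_x^n$. The paper's Theorem \ref{car} uses only the weight $e^{\lambda x}$, for which the conjugated operator is a Fourier multiplier $(\tau-(\xi+i\lambda)^n)^{-1}$ amenable to explicit partial-fraction estimates; the superlinear behavior lives entirely in the lower estimate, whose weight is the \emph{quadratic} function $\psi_\alpha=\alpha(\frac xR+\phi(t))^2$ with $\alpha\sim R^{n/(n-1)}$ forced by the positivity of the commutator, pushed up to $\alpha\sim R^{4/3+\e}$ by the requirement $s>1/(2n-2p-1)$ with $p=n-2$ in Theorem \ref{estinferior}. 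Relatedly, you attribute the $4/3^+$ threshold to balancing the commutator against the nonlinearity; in fact the exponent comes from the interplay just described inside the lower estimate, while the hypothesis $\alpha>\frac{n+1}3$ plays the separate role of giving the coefficients $F_j$ of the difference equation enough spatial decay (roughly $(1+x_+)^{-1^-}$, via interpolation) to be absorbed on the left-hand side of the Carleman inequality. As written, your proposal would need either a proof of the superlinear-weight Carleman inequality and of superexponential persistence, or a reorganization along the paper's lines where neither is required.
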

 The proof of Theorem I can be adapted to obtain a similar continuation principle for equation \eqref{ec} when  $p\leq k$. In this case we require a weaker decay for $w(0)$ and $w(1)$ and consider some minor modifications in  the  polynomial decay hypothesis for $u_1$ and $u_2$.    For the sake of simplicity we state this result without making special emphasis in the optimal value of  $\alpha$.

\begin{TII} For  odd $n\geq 5$, $k=(n-1)/2$,  and $\alpha_0>0$ sufficiently large, suppose that $u_1,\,u_2\in C([0,1];H^{n+1}(\mathbb R)\cap L^2((1+|x|)^{2\alpha_0}\,dx))$ are two solutions of  the equation
\begin{equation}\partial _{t}u+ (-1)^{k+1}\partial^ {n}_{x}u+P(u, \partial _{x}u, \cdots, \partial_x^p u) =0\label{Lax1}\end{equation}
with $p\leq k$. Define $w:=u_1-u_2$.  Then,  there is $a>0$ ,which depends only on $n$, such that if 
\begin{equation}w(0), w(1)\in L^2(e^{2ax_+^{n/{(n-1)}}}dx),\label{decayhyp2}\end{equation}
then $w\equiv0$.
\end{TII}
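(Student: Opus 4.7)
The plan is to follow the three-step template behind Theorem~I, with adjustments reflecting the weaker hypothesis $p\le k$. Subtracting the equations for $u_1$ and $u_2$, the difference $w=u_1-u_2$ satisfies a linear equation of the form
$$\partial_t w+(-1)^{k+1}\partial_x^n w+\sum_{j=0}^{p}a_j(x,t)\,\partial_x^j w=0,$$
where the coefficients $a_j$ are polynomials in $u_1,\,u_2$ and their $x$-derivatives of order at most $p\le k$. The polynomial decay hypothesis together with $u_1,u_2\in C([0,1];H^{n+1}(\R))$ ensures that each $a_j$ is bounded and decays polynomially in $x$, uniformly in $t$. The key structural point is that the order of differentiation of $w$ in the perturbation is at most $p\le k=(n-1)/2$, strictly less than half the order $n$ of the principal part.

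First I would propagate the exponential decay hypothesis $w(0),w(1)\in L^2(e^{2ax_+^{n/(n-1)}}dx)$ to all intermediate times. Choose a smooth, convex-in-$t$ interpolant $\varphi(x,t)$ of $x_+^{n/(n-1)}$ with $\varphi(x,0)=\varphi(x,1)=x_+^{n/(n-1)}$, multiply the equation for $w$ by $e^{2a\varphi}w$, and integrate. Integration by parts against $\partial_x^n$ produces a positive term morally of the shape $\int a\varphi_x(e^{a\varphi}\partial_x^k w)^2\,dx$, whose weight $a\varphi_x\sim a x_+^{1/(n-1)}$ grows algebraically, together with a cascade of commutator terms of intermediate order in $w$. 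Because the perturbation contains only $\partial_x^j w$ with $j\le k$, it is subordinate to this positive quadratic form via Gagliardo--Nirenberg interpolation and can be absorbed for $a$ large. A Gronwall-in-$t$ argument then yields $e^{a\varphi}w\in L^\infty_tL^2_x$, and an analogous argument on $\partial_x^\ell w$ yields the higher-norm persistence needed below.

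Next I would combine this persistence with the Carleman-type estimate and the lower estimate (Lemma~\ref{lema3} and Theorem~\ref{estinferior}) built for Theorem~I, both of which apply to any linear dispersive operator of the form $\partial_t+(-1)^{k+1}\partial_x^n$ with subordinate perturbation. The lower estimate reads, schematically,
$$\|w\|_{L^2(Q_0)}\,\le\,C\,e^{cR^{n/(n-1)}}\,\|w\|_{H^s([R,R+1]\times[0,1])},\qquad R\gg 1,$$
with $Q_0$ a fixed unit rectangle near the origin and $c=c(n)>0$. Combined with the propagated bound $\|w\|_{H^s([R,R+1]\times[0,1])}\lesssim e^{-aR^{n/(n-1)}}$, the right-hand side is $O(e^{(c-a)R^{n/(n-1)}})$. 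Choosing $a>c$ and letting $R\to\infty$ forces $w\equiv 0$ on $Q_0$; translating in $x$ and repeating then gives $w\equiv 0$ on $\R\times[0,1]$. The threshold $a$ announced in the statement is precisely this constant $c(n)$.

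The main obstacle is the propagation step and, entangled with it, the execution of the Carleman and lower estimates for arbitrary odd $n\ge 5$: conjugating $\partial_t+(-1)^{k+1}\partial_x^n$ by $e^{a\varphi}$ generates a combinatorial cascade of commutator terms weighted by products of derivatives of $\varphi$, and the positivity of the leading symmetric part must be controlled uniformly in high frequencies. The hypothesis $p\le k$ is precisely what prevents the lower-order perturbation from competing with this positive gain, which is why the natural Airy-type decay $x_+^{n/(n-1)}$ (cf.\ \cite{SSS}) suffices here, whereas Theorem~I's nonlinearity of order up to $n-2$ forces the stronger decay $x_+^{4/3+\epsilon}$.
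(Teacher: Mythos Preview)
Your high-level strategy (propagate decay, combine with the lower estimate, derive a contradiction) matches the paper's, but two key steps diverge from the paper in ways that leave genuine gaps.

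First, the paper does \emph{not} propagate the strong weight $e^{ax_+^{n/(n-1)}}$ to intermediate times. It propagates only the much weaker $e^{\beta x}$ decay (Theorem~\ref{decaimiento} and Remark~\ref{remdec}), and that is used solely to kill the tail term in the Carleman inequality as the truncation parameter $N\to\infty$. The upper bound on $A_R(w)$ comes instead from the Carleman estimate (Theorem~\ref{car}) applied with $\lambda=aR^{1/(n-1)}$: its right-hand side involves $\|e^{\lambda x}w(0)\|$ and $\|e^{\lambda x}w(1)\|$, which are controlled \emph{directly} by the hypothesis \eqref{decayhyp2} since $\lambda x\le a x^{n/(n-1)}$ for $x\ge R$. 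In your sketch the Carleman estimate is mentioned but not actually used; you derive the bound $A_R(w)\lesssim e^{-aR^{n/(n-1)}}$ from the propagated decay instead. Your proposed direct propagation of $e^{a\varphi}$ with $\varphi\sim x_+^{n/(n-1)}$ is a substantially harder route (you yourself flag it as ``the main obstacle''), and it is not clear the commutator cascade can be absorbed without degrading the constant $a$ --- which would break the final comparison $a>c$. The paper's route sidesteps this obstacle entirely.

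Second, the statement requires $a$ to depend only on $n$. The lower-estimate constant $C_*$ in Theorem~\ref{estinferior} depends on $n$ and on the parameter $r$ locating the rectangle $Q=[0,1]\times[r,1-r]$. To fix $r=1/3$ independently of $w$, the paper first uses forward and backward $e^{\pm x}$-decay preservation (the latter enabled by the \emph{two-sided} polynomial weight $(1+|x|)^{2\alpha_0}$ in the hypothesis, via $x\mapsto -x$, $t\mapsto 1-t$) to show that $w(t_0)=0$ for a single $t_0$ already forces $w\equiv 0$. Hence if $w\not\equiv 0$ one can find $Q\subset [0,1]\times[1/3,2/3]$ with $\|w\|_{L^2(Q)}>0$; then $C_*=C_*(n,1/3)$ and $a=C_*/2+1$ depend only on $n$. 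Your sketch omits this step: ``translating in $x$ and repeating'' recovers $w\equiv 0$ only on $\R\times[r,1-r]$, and, more seriously, if $r$ is determined by where $w$ happens not to vanish then your threshold $a$ depends on $w$, not just on $n$.
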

The article is organized as follows: In section \ref{exponen} we prove that the exponential decay for $w$ in the  semi-axis $x>0$  is preserved  in time. In section \ref{sectioncar} we establish the Carleman type estimates and in section \ref{sectionlow} we prove the lower estimates. Finally we give the proofs of Theorem I and Theorem II in section \ref{sectionmain}.

 Throughout the paper the letters $C$ and $c$ will denote diverse positive constants which may change from line to line, and whose dependence on certain parameters is clearly established in all cases. Sometimes, for a parameter $a$, we will use the notations $C_a$, $C(a)$, and $c_a$ to make emphasis in the fact that the constants  depend upon $a$.  We frequently write $f(\cdot_s)$ to denote a function $s\mapsto f(s)$. For a set $A$, $\chi_A$ will denote the characteristic function of $A$. The symbols $\widehat{\;}$ and $\widecheck{\;}$ will denote the Fourier and the inverse Fourier transform, respectively. The notations $\,\widehat{\;}^{\,\,{}_{x}}$ and  $\widecheck{\;}^{\,{}_{\xi}}$ will emphasize the facts that the Fourier transform and its inverse are taken with respect to  specific variables $x$ and $\xi$, respectively. For $1\leq p,q<\infty$, $A,B\subseteq\mathbb R$, $D=A\times B$, and $f=f(x,t)$ we will denote
 $$\|f\|^p_{L^p_xL^q_t(D)}:=\int_A\Bigl(\int_B|f(x,t)|^q\,dt\Bigr)^{p/q}\,dx\,.$$
 We will  use  similar definitions when  $p=\infty$ or $q=\infty$ and also for $\|f\|_{L^q_tL^p_x(D)}$.

\section{Exponential decay}\label{exponen}

In this section we prove that if the difference $w$ of two solutions of \eqref{Lax} decays exponentialy at $t=0$, then this decay is preserved at all positive times. This property will be crucial for the application of the Carleman estimates in the proofs of Theorems I and II. 
\begin{Theorem}\label{decaimiento}
For odd $n\geq 5$, $k=(n-1)/2$, and $\alpha>(n+1)/4$, let $u_1,u_2\in C([0,1];\h{n+1}\cap L^2((1+x_+)^{2\alpha}\,dx)))$ be  two solutions of \eqref{ec}-\eqref{poli1}, and define $w:=u_1-u_2$. Let $\beta>0$ and suppose that $w(0)\in L^2(e^{\beta x} \, dx)$. Then
\begin{equation}
\sup_{t\in[0,1]}\|w(t)\|_{L^2(e^{\beta x} \, dx)}<\infty.\end{equation}
\end{Theorem}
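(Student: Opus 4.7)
My plan is to derive a linear variable-coefficient equation satisfied by $w=u_1-u_2$, perform a weighted $L^2$ energy estimate with a truncated exponential weight, and pass to the limit as the truncation is removed. To obtain the linear equation, I would subtract the two equations and apply, on each monomial of $P$, the telescoping
$$\prod_{\ell=1}^d \partial_x^{m_\ell}u_1-\prod_{\ell=1}^d \partial_x^{m_\ell}u_2=\sum_{i=1}^d\Big(\prod_{\ell<i}\partial_x^{m_\ell}u_2\Big)(\partial_x^{m_i}w)\Big(\prod_{\ell>i}\partial_x^{m_\ell}u_1\Big)$$
to write
$$\partial_t w+(-1)^{k+1}\partial_x^n w+\sum_{j=0}^{n-2}c_j(x,t)\,\partial_x^j w=0,$$
whose coefficients $c_j$ are polynomials in the derivatives $\partial_x^a u_1$, $\partial_x^a u_2$ for $0\leq a\leq n-2$. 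Since $u_1,u_2\in C([0,1];H^{n+1}(\mathbb{R}))$ and the one-dimensional embedding $H^{n+1}\hookrightarrow W^{n,\infty}$ holds, each $c_j$ and sufficiently many of its $x$-derivatives are uniformly bounded on $[0,1]\times\mathbb{R}$.

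For the weight, for $N\in\mathbb{N}$, I would take a nondecreasing $\varphi_N\in C^\infty(\mathbb{R})$ with $\varphi_N(x)=e^{\beta x}$ for $x\leq N$, $\varphi_N(x)=e^{\beta(N+1)}$ for $x\geq N+1$, and satisfying the uniform logarithmic-derivative bounds $|\varphi_N^{(j)}|\leq C_{j,\beta}\varphi_N$ for $j=1,\dots,n$. Then $\varphi_N\nearrow e^{\beta x}$ as $N\to\infty$, and $E_N(t):=\int\varphi_N^2 w(t)^2\,dx$ is finite for every $t$ by boundedness of $\varphi_N$ and $w(t)\in L^2$. Multiplying the equation for $w$ by $\varphi_N^2 w$, integrating in $x$, and then integrating by parts systematically ($n$ times in the dispersive term, with the oddness of $n$ killing the fully-transported piece against itself, and analogously in the nonlinear terms) yields an identity expressing $\tfrac12\tfrac{d}{dt}E_N(t)$ as a finite linear combination of integrals
$$\int(\varphi_N^2)^{(m)}\,G(x,t)\,(\partial_x^{i_1}w)(\partial_x^{i_2}w)\,dx,\qquad i_1+i_2+m\leq n,\ m\geq 1,$$
with $G\in L^\infty([0,1]\times\mathbb{R})$ built from the $c_j$'s and their derivatives. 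Using $|(\varphi_N^2)^{(m)}|\leq C_m\beta^m\varphi_N^2$, the task reduces to bounding the mixed integrals $\int\varphi_N^2(\partial_x^{i_1}w)(\partial_x^{i_2}w)\,dx$ uniformly in $N$.

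The crux is to show each of these is at most $CE_N(t)+C'$ with $N$-independent constants. The balanced integral ($i_1=i_2=k$, $m=1$) carries the favorable sign fixed by the $(-1)^{k+1}$ in front of $\partial_x^n$---the same sign responsible for forward well-posedness---and furnishes a nonpositive ``good'' term $-c\beta\int\varphi_N^2(\partial_x^k w)^2\,dx$ that either drops or, via Young's inequality, absorbs the higher-derivative mixed integrals appearing elsewhere. Integrals in which one factor has order at most $k$ are controlled by placing that factor in $L^\infty$ via the Sobolev embedding and the uniform bound on $\|w\|_{C([0,1];H^{n+1})}$. The polynomial-decay hypothesis $u_1,u_2\in L^2((1+x_+)^{2\alpha}dx)$ enters through a Kato-type interpolation providing weighted $L^2$ decay of intermediate derivatives of $w$ on the half-line $x\geq 0$ that the $L^\infty$ bound alone cannot supply; the threshold $\alpha>(n+1)/4$ is precisely what that interpolation requires. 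Assembling the estimates yields $\tfrac{d}{dt}E_N(t)\leq CE_N(t)+C'$ with $C,C'$ independent of $N$; Gronwall's inequality, the hypothesis $w(0)\in L^2(e^{\beta x}\,dx)$, and monotone convergence as $N\to\infty$ then complete the proof. \emph{The principal obstacle} is the bookkeeping of the many terms produced by the $n$-fold integration by parts, and the verification that each of them can indeed be closed uniformly in $N$ through the combination of the good balanced term, Sobolev embedding, and the polynomial-decay interpolation.
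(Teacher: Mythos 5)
Your overall strategy (truncated exponential weight, weighted energy identity, Gronwall, then pass to the limit) is the same as the paper's, but the core absorption step as you describe it does not close, for two related reasons. First, your claim that every integral surviving the integration by parts carries at least one derivative of the weight ($m\geq 1$) is false for the nonlinear part: when derivatives are unloaded from $\partial_x^j w$ onto the coefficients $c_j$ rather than onto the weight, one is left with terms of the form $\int \varphi_N^2\,(\partial_x^{s} c_j)\,(\partial_x^{r}w)^2\,dx$ with $1\leq r\leq k-1$ and the \emph{full} weight $\varphi_N^2$. Second, the favorable term produced by the dispersive part is $-c\int (\varphi_N^2)'(\partial_x^k w)^2\,dx$, not $-c\beta\int\varphi_N^2(\partial_x^k w)^2\,dx$ as you write: converting the former into the latter requires the lower bound $(\varphi_N^2)'\geq c\beta\,\varphi_N^2$, which necessarily fails for any truncation of $e^{\beta x}$ that is bounded — in particular for your weight, which is constant for $x\geq N+1$, so that $(\varphi_N^2)'\equiv 0$ there while $\varphi_N^2=e^{2\beta(N+1)}>0$. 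Hence the good term gives no control at all on the flat region, exactly where the $m=0$ terms above carry the enormous constant weight $e^{2\beta(N+1)}$ against intermediate derivatives of $w$, which are not dominated by $E_N(t)=\int\varphi_N^2w^2$ uniformly in $N$. Boundedness of the $c_j$'s (your $H^{n+1}\hookrightarrow W^{n,\infty}$ remark) cannot repair this.

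The paper's proof resolves precisely this point by a different truncation and an extra structural property of the weight. It takes $\varphi_N'$ equal to $\beta e^{\beta x}$ for $x\leq N$ and \emph{constant} $=\beta e^{\beta N}$ (not zero) for $x\geq N+1$, so $\varphi_N$ grows linearly at infinity and satisfies $\varphi_N(x)\leq C(1+x_+)\varphi_N'(x)$ with $C$ independent of $N$, in addition to $|\varphi_N^{(j)}|\leq C_j\varphi_N'$. The $m=0$ terms are then handled by writing $\varphi_N\leq C(1+x_+)\varphi_N'$ and absorbing the factor $(1+x_+)$ into the coefficient: the hypothesis $u_i\in L^2((1+x_+)^{2\alpha}dx)$ with $\alpha>(n+1)/4$, interpolated against the $H^{n+1}$ bound, yields $\|(1+x_+)\partial_x^{r}u_i(t)\|_{L^\infty}\leq C$ for the relevant orders $r\leq n-4$ uniformly in $t$. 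This converts every bad term into one with the good weight $\varphi_N'$, after which an interpolation inequality of the form $\int\varphi_N'(\partial_x^jw)^2\leq\varepsilon\int\varphi_N'(\partial_x^kw)^2+C_\varepsilon\int\varphi_N w^2$ lets the negative term absorb everything. So the polynomial-decay hypothesis enters through weighted $L^\infty$ decay of the \emph{coefficients} (derivatives of $u_1,u_2$), mediated by the inequality $\varphi_N\leq C(1+x_+)\varphi_N'$ — not, as you suggest, through weighted $L^2$ decay of intermediate derivatives of $w$ (the latter is used only to justify the integrations by parts). To repair your argument you would need to replace your flat truncation by one whose derivative stays bounded below on $x\geq N$ and add the mechanism just described.
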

\begin{proof}
Let us denote $z_i=(u_i,\partial_xu_i,\cdots\partial_x^{n-2}u_i)$, $i=1,2$. Then $w$ is a solution of the differential equation
\begin{equation}
\partial _{t}w+ (-1)^{k+1}\partial^ {n}_{x}w+ P(z_1)-P(z_2)=0.\label{apr}
\end{equation}

We will first prove that the theorem is valid provided we can construct a  sequence $\{\phi_\N \}_{\N \in\mathbb N}$ of  nondecrasing functions in $ C^{\infty}(\R)$ satisfying  for all $x\in\mathbb R$ the conditions
\begin{equation}\varphi_\N (x)\to e^{\beta x}\quad \text{as }\, N\to\infty\quad \text{and}\quad 0\leq \varphi_\N (x)\leq C\,e^{\beta x},\label{condfi3}\end{equation}
\begin{equation} \varphi_\N (x)\leq C_\N (1+x_+)^{(k+2)/4}\,,\label{condfi2}\end{equation}
\begin{equation}|\varphi_\N ^{(j)}(x)|\leq C_j\varphi'_\N (x)\quad\text{for }j=2,3\cdots,n=2k+1\;\;\text{and } \varphi_\N'(x)\leq  C\varphi_\N(x),\label{condfi}
\end{equation}
\begin{equation}
\varphi_\N (x)\leq C(1+x_+)\varphi'_\N (x)\,,\label{condfi1} 
\end{equation}
where the constants $C$ and $C_j$ are independent of $N$.

We multiply \eqref{apr} by $\varphi _\N u$ and, for $t$ fixed, integrate in $\R$. Thus, by applying integration by parts we obtain that
\begin{align}
\frac12\frac{d}{dt}\int\varphi_\N  w^2&=-\textstyle{\frac{2k+1}2}\int\varphi_\N '(\partial_x^kw)^2+c_{k-1}\int\varphi_\N ^{(3)}(\partial_x^{k-1}w)^2+\cdots+c_1\int \varphi_\N ^{(2k-1)}(\partial_x w)^2+\frac 12\int\varphi_\N ^{(2k+1)}w^2\notag\\
&\quad -\int( P(z_1)-P(z_2))\,\varphi_\N  w\,.
\label{apriori}
\end{align}
The integration by parts is justified as follows:
since there is a constant $C>0$ such that $\|(1+x_+)^{\alpha}u_i(t)\|_{L^2}\leq C$, and $\|u_i(t)\|_{H^{n+1}(\mathbb R)}\leq C$ for all $t\in[0,1]$ and $i=1,2$, by using integration by parts and  truncation functions, it can be proved that the following interpolation property holds:
\begin{equation}
\|(1+x_+)^{\alpha(1-\frac j{(n+1)})}\partial_x^j u_i(t)\|_{\ldos}\leq C\,,\quad \text{for all }t\in[0,1], \quad i=1,2\,,\quad j=0,\cdots,n+1. \label{inter}
\end{equation}
Since $\alpha>(n+1)/4$, it follows that, for $0\leq j\leq k$,  $(1+x_+)^{(k+2)/4}\partial ^j_xw(t)\in\ldos$, and thus, from \eqref{condfi2} and \eqref{condfi}  $\varphi^{(l)}\partial_x^jw(t)\in\ldos$ for all positive integers $l$. This implies that all the terms which appear in the procedure to obtain  \eqref{apriori} are in a right setting for the application of  the integration by parts.

Let us estimate the last term  on the rand-hand side of \eqref{apriori}. From \eqref{poli1} we have that
\begin{equation}
P(z)=\sum_{d=2}^{k+1}A_d(z)\quad\text{where}\quad A_d(z)=\kern-20pt\sum_{\substack{|m|=n-2(d-1)\\0\leq m_1\leq\cdots\leq m_d}} \kern-15pt a_{d,m}\,\partial_x^{m_1}u\cdots\partial_x^{m_d}u\,,\label{P}
\end{equation}
and thus 
\begin{equation}
|\int\bigl(P(z_1)-P(z_2)\bigr)\varphi_\N \,w\,dx|=|\sum_{d=2}^{k+1}\int\bigl(A_d(z_1)-A_d(z_2)\bigr)\varphi_\N \,w\,dx|\equiv|\sum_{d=2}^{k+1}I_d|\,.\label{suma1}
\end{equation}
It is easily seen that
\begin{equation}
A_d(z_1)-A_d(z_2)=\sum_{\substack{|m|=n-2(d-1)\\0\leq m_1\leq\cdots\leq m_d}} \kern-15pt  a_{d,m}\bigl(\partial_x^{m_1}w\partial_x^{m_2}u_1\cdots\partial_x^{m_d}u_1+\partial_x^{m_1}u_2\partial_x^{m_2}w\cdots\partial_x^{m_d}u_1+\partial_x^{m_1}u_2\partial_x^{m_2}u_2\cdots\partial_x^{m_d}w\bigr).\label{Ad}
\end{equation}
We estimate $I_2$, which,   having the derivatives of the highest order, is the most critical term in \eqref{suma1}. From \eqref{Ad},

\begin{equation}I_2=\int\bigl(A_2(z_1)-A_2(z_2)\bigr)\varphi_\N \,w\,dx=\sum_{\substack{m_1+m_2=n-2\\ 0\leq m_1\leq m_2}}a_{2,m}\int(\partial_x^{m_1}w\partial_x^{m_2}u_1+\partial_x^{m_1}u_2\partial_x^{m_2}w)\varphi_\N \,w\,dx.\label{Idos}\end{equation}

We estimate only the second term on the right-hand side of \eqref{Idos}, the other term being similar. We apply integration by parts to reduce the order of $\partial_x^{m_2}w$ and obtain that
\begin{equation}\int(\partial_x^{m_1}u_2)\,(\partial_x^{m_2}w)\,\varphi_\N \,w=\kern-40pt\sum_{\kern40pt\substack{\\ \\r_1+r_2+2r_3=m_1+m_2=n-2\\r_1\geq m_1}}\kern-40pt c_{r_1,r_2}\int(\partial^{r_1}_xu_2)\,\varphi_\N ^{(r_2)}\,(\partial_x^{r_3}w)^2\label{I2}\end{equation}
To analyse the terms in this sum we consider the cases $r_2=0$ and $r_2\geq 1$: 

(i) If $r_2=0$ and $r_3=0$, then  $r_1=n-2$ and  we bound the corresponding integral in \eqref{I2}  by $$C\|\partial_x^{n-2}u_2(t)\|_{L^{\infty}}\int\varphi_\N \,w^2\leq C\int\varphi_\N \,w^2,$$ 
where $C$ is independent of $t\in[0,1]$ by the Sobolev embedding of $H^1(\mathbb R)$ in $L^\infty(\mathbb R)$.

If $r_2=0$ and $r_3\geq 1$, then the maximum value of $r_1$ in \eqref{I2} is $n-4$. Therefore, using the fact that 
 $\varphi_\N \leq  C(1+x_+)\varphi'_\N $ we bound the corresponding integral in \eqref{I2} by
 \begin{equation}C\max_{0\leq r_1\leq n-4}\|(1+x_+)\partial^{r_1}_xu_2(t)\|_{L^{\infty}}\int\varphi_\N '\,(\partial_x^{r_3}w)^2.\label{apriori3}\end{equation}
 From \eqref{inter}  it can be seen that if $\Psi\in C^{\infty}(\R)$ is a truncation function with $\Psi\equiv0$ in $(-\infty,1]$, and $\Psi\equiv 1$ in $[2,+\infty)$, then $\Psi(\cdot)(1+x_+)^{\alpha(1-\frac {j+1}{n+1})}\partial_x^j u_i(t)\in H^1(\R)$, $i=1,2$,  $j=0,1,\cdots,n$, and
 \begin{equation}\|\Psi(\cdot)(1+x_+)^{ \alpha(1-\frac {j+1}{n+1})}\partial_x^j u_i(t)\|_{H^1(\mathbb R)}\leq C(\|(1+x_+)^{\alpha}u_i(t)\|_{\ldos}\,,\|u_i(t)\|_{H^{n+1}(\R)})\leq C\,\quad\text{for all }t\in[0,1]. \label{alpfa1} \end{equation}
 In particular, for $j=0,\cdots,n-4$, $\alpha(1-\frac {j+1}{n+1})> \frac{n+1}4(1-\frac{n-3}{n+1})= 1$, and thus from the Sobolev embedding of $H^1$ in $L^\infty$ we conclude that
 \begin{equation}\max_{0\leq j\leq n-4}\|(1+x_+)\partial_x^ju_i(t)\|_{L^\infty(\R)} \leq C,\label{Idos1}\end{equation}
 with $C$ independent of $t\in[0,1]$. Thus \eqref{apriori3} is bounded by $C\int\varphi_\N '(\partial_x^{r_3}w)^2$.

 (ii) If $r_2\geq1$, then $r_1\leq n-3$. From \eqref{condfi} $\varphi^{(r_2)}\leq C_{r_2}\,\varphi' \leq C\varphi'$ for $1\leq r_2\leq n-2$. Thus we bound the corresponding term in \eqref{I2} by 
 \begin{equation}C\max_{0\leq r_1\leq n-3}\|\partial_x^{r_1}u_2(t)\|_{L^{\infty}}\int\varphi_\N '\,(\partial_x^{r_3}w)^2\leq C \int\varphi_\N '\,(\partial_x^{r_3}w)^2.\label{Idos3}\end{equation}
 
 Now, let us determine the maximum value of $r_3$ appearing in \eqref{I2}. For that, we observe that  $r_1+r_2+2r_3=n-2$ is odd, and thus the maximum value of $r_3$ occurs when $(r_1,r_2)=(0,1)$ or $(1,0)$, which then gives $r_3\leq (n-3)/2=k-1$. 
 
 Gathering the estimates of the cases (i) and (ii) above, and taking into account that $r_3\leq k-1$, we conclude that
 \begin{equation}|I_2|\leq C\sum_{j=1}^{k-1}\int\varphi_\N '(\partial_x^jw)^2\,dx +C\int\varphi_\N  w^2\,.\end{equation}
 Proceeding in a similar way, we obtain the same bound for $|I_3|,\cdots |I_{k+1}|$, and thus  for the left hand side of \eqref{suma1}. 
 Therefore, returning to \eqref{apriori} and using the fact that, from condition \eqref{condfi}, $|\phi^{(j)}|\leq C\,\phi'$, $j=1,\cdots 2k+1$, we obtain

 \begin{equation}
\frac12\frac{d}{dt}\int\varphi_\N  w^2\leq-{\textstyle{\frac{2k+1}2}}\int\varphi_\N '(\partial_x^kw)^2 + C\sum_{j=1}^{k-1}\int\varphi_\N '(\partial_x^jw)^2\,dx +C\int\varphi_\N  w^2\label{apriori2}
\end{equation}

To handle the terms in \eqref{apriori2} having derivatives $\partial_x^jw$ with $j=1,\cdots, k-1$, we will show that given $\varepsilon>0$ there is a constant $C_\varepsilon>0$ such that 
for $j=1,\cdots,k-1$ 
\begin{equation} \int\varphi_\N '( \partial_x^{j}w)^2\leq \varepsilon\int\varphi_\N '(\partial^k_xw)^2+C_\varepsilon\int\varphi_\N  w^2\,.\label{apriori0}\end{equation}
In fact, we first prove that 
\begin{equation}
\int\varphi_\N '(\partial_x^jw)^2\leq\varepsilon\int\varphi_\N '(\partial_x^{j+1}w)^2+C_\varepsilon\int\varphi_\N  w^2\,.\label{apriori1}\end{equation}
This can be seen by induction: by applying integration by parts,  Young's inequality $|xy|\leq  \frac1{2\varepsilon} x^2+\frac{\varepsilon}{2}y^2$, and the properties of $\varphi_N$ we can see that \eqref{apriori1} is valid for $j=1$. If we assume that \eqref{apriori1} is valid for $j-1$, then, again  integrating by  parts and  using Young's inequality
\begin{align*}
\int\varphi_\N '(\partial_x^jw)^2&=\frac12\int\varphi_\N ^{(3)}(\partial_x^{j-1}w)^2-\int\varphi_\N '\partial_x^{j-1}w\,\,\partial_x^{j+1}w\\
&\leq C\int\varphi_\N '(\partial_x^{j-1}w)^2+\frac 1{2\varepsilon}\int \varphi_\N '(\partial_x^{j-1}w)^2+\frac\varepsilon 2\int\varphi_\N '(\partial_x^{j+1}w)^2.
\end{align*}
If we apply the induction hypothesis at level $j-1$, with $\frac{1/2}{C+1/{(2\varepsilon)}}$ instead of $\varepsilon$, then we have that
\begin{equation}
\int\varphi_\N '(\partial_x^jw)^2\leq \textstyle{\frac12}\int\varphi_\N '(\partial_x^jw)^2+C_\varepsilon\int \varphi_\N  w^2+\frac\varepsilon2\int\varphi_\N '(\partial_x^{j+1}w)^2\,,
\end{equation}
which gives \eqref{apriori1}. From a repeated application of  \eqref{apriori1} we obtain \eqref{apriori0}.

Therefore, taking into account that the first term on the right-hand side of  \eqref{apriori2}  is negative,  we can apply \eqref{apriori0}  with $\varepsilon$ sufficiently small,  to absorb with this negative term the integrals containing $(\partial_x^jw)^2$ in \eqref{apriori2}. Thus we conclude that 
\begin{equation}\frac{d}{dt}\int\varphi_\N  w^ 2\leq C_\varepsilon\int\varphi_\N  w^2,\label{tres1}\end{equation}
which, from Gronwall's inequality and \eqref{condfi3} implies that 
\begin{equation*}\int\varphi_\N  w(t)^2\,dx\leq C\int\varphi_\N  w(0)^2\,dx\leq C\int e^{\beta x}w(0)^2dx\,,\quad\text{for all } t\in[0,1] ,
\end{equation*} 
where $C$ is independent of $t\in[0,1]$ and $N$.  Since $\varphi_\N (x)\to e^{\beta x}$ as $N\to\infty$, the conclusion of the theorem will follow by applying Fatou's Lemma on the left-hand side of the former inequality.

 In this way, the proof of theorem \ref{decaimiento} will be complete if we construct a sequence of functions $\varphi_\N $, satisfying the conditions \eqref{condfi3} to \eqref{condfi1}. For that we proceed as follows: 
 
 Let $\tilde\phi \in C^\infty(\R)$ be a nonincreasing function such that $\tilde\phi(x)=1$ for $x\in (-\infty, 0]$, and $\tilde\phi(x)=0$ for $x\in[1,\infty)$. For each $N\in\mathbb N$ let $\phi_{\N}(x)\equiv\phi(x) :=\tilde\phi(x-N)$. Thus $\phi$ is supported in $(-\infty, N+1]$, and $(1-\phi)$ in   $[N,+\infty)$. We define 
 \begin{equation}\theta_\N (x)\equiv\theta(x):=\phi \beta e^{\beta x}+(1-\phi)\beta e^{\beta N}\label{tetan}\end{equation}
 and 
 \begin{equation}\varphi_\N (x)\equiv\varphi(x):=\int_{-\infty}^x\theta(x')\,dx'\,.\label{fifi}\end{equation}
 Let us show that $\varphi_\N $ satisfies the conditons \eqref{condfi3} to \eqref{condfi1}.
 
Taking into account the support of $(1-\phi)$, we see that $0\leq \theta(x)\leq \phi\beta e^{\beta x}+(1-\phi)\beta e^{\beta x}=
\beta e^{\beta x }$. Thus, by integrating $\varphi'$ we have that $0\leq\varphi(x)\leq e^{\beta x}$. Besides, from the definition of $\varphi$ it is clear that $\varphi_\N (x)\to e^{\beta x}$ as $N\to\infty$. Thus $\varphi$ satisfies \eqref{condfi3}.

To prove \eqref{condfi2} it suffices to observe that for $x\leq N$, $\varphi(x)\leq e^{\beta N}\leq C_{N}(1+x_+)^{(k+2)/4}$, while for $x>N$, 
\begin{equation}\varphi(x)\leq \int_{-\infty}^{N+1}\beta e^{\beta x'} dx'+\int_N ^x\beta e^{\beta N} dx'\leq e^{\beta(N+1)}+ x\beta e^{\beta N}\leq C_{N}(1+x_+)^{(k+2)/4},\label{es}\end{equation} since $k\geq 2$. Thus we have \eqref{condfi2}.

We proceed now to prove \eqref{condfi1}. For $x\leq N$, $\varphi(x)=e^{\beta x}=\frac1\beta\varphi'(x)\leq C (1+x_+)\varphi'(x)$. If $x> N$, then, from \eqref{es}, and using the fact that $x\geq 1$, we see that
\begin{equation}
\varphi(x)\leq  e^{\beta(N+1)}+ x\beta e^{\beta N}\leq (\frac1{\beta}+1)e^\beta x \beta  e^{\beta N}\,.\label{fiprima2}
\end{equation}
On the other hand, for $x>N$, 
\begin{equation} x\varphi'(x)=x\theta(x)\geq N\phi\beta e^{\beta N}+x(1-\phi)\beta e^{\beta N}.\label{fiprima}\end{equation}
Therefore, from \eqref{fiprima2} and \eqref{fiprima}, taking into account the supports of $\phi$ and $(1-\phi)$ we observe   that  for $x>N+1$, $ x\varphi'(x)\geq C\varphi(x)$, while for $N<x<N+1$,  we conclude  that
\begin{equation*}x\varphi'(x)\geq N\phi\beta e^{\beta N}+N(1-\phi)\beta e^{\beta N}=N\beta e^{\beta N}\geq  {\textstyle \frac12}(N+1)
\beta e^{\beta N}\notag \geq {\textstyle \frac12}x\beta e^{\beta N}\geq C\varphi(x),\label{thetaphi}\end{equation*}
from which \eqref{condfi1} follows.

Finally, we verify \eqref{condfi}. We observe that that for $j=1,2\cdots,$ and fixed $\beta>0$, 
 \begin{align*}
 |\vp^{(j+1)}|&=|\theta^{(j)}|=|\phi \beta^{1+j}e^{\beta x}+\sum_{l=1}^j c_{j,l}\phi^{(l)}\beta^{j-l}\beta e^{\beta x}- \phi^{(j)}\beta e^{\beta N}|\\
 &\leq \beta^j\phi\beta e^{\beta x}+C_j(1+\beta)^{j-1}(\beta e^{\beta(N+1)}+\beta e^{\beta N})\chi_{[N,N+1]}\\
 &\leq \beta^j\theta+C_j\beta e^{\beta N}\chi_{[N,N+1]}= \beta^{j}\theta+C_j(\beta\phi e^{\beta N}+(1-\phi)\beta e^{\beta N})\chi_{[N,N+1]}\\
 &\leq \beta^j\theta +C_j\theta =C_{j}\varphi',
 \end{align*}
 where $C_j$ depends on $\beta$ and $j$ but is independent of $N$. Thus, the first inequality in \eqref{condfi} is proved. For the  inequality $\phi' \leq C\,\phi$   in \eqref{condfi} we proceed by integrating the inequality $\phi''\leq C\,\phi'$ already established. This completes the proof of Theorem \ref{decaimiento}.
 \end{proof}
\begin{remark}\label{remdec} {\rm For the case of equation \eqref{ec}, with $p\leq k$, we can establish a result similar to Theorem \ref{decaimiento},  by making minor modifications and some simplifications in the former proof. In the simple case $p\leq 1$, for example for the equation
$$\partial_tu+(-1)^{k+1}\partial_x^nu=-u\partial_xu,$$
it is possible to follow the procedure of the proof of Theorem \ref{decaimiento} to establish, without the hypothesis of polynomial decay, that the exponential decay at $t=0$ is preserved for $t\in[0,1]$.  This can be done by taking $\varphi_\N(x):=\int_{-\infty}^x\theta_\N(x')\,dx'$ as in \eqref{fifi}, with $\theta_\N(x)\equiv\theta(x):=\phi \beta e^{\beta x}+(1-\phi)\beta e^{-\beta(x-2N) }$, instead of the functions $\theta_N$ defined in \eqref{tetan}. This functions $\varphi_\N$ are bounded and satisfy \eqref{condfi3} and \eqref{condfi} which is enough for this case.
}
\end{remark}
\section{Estimates of Carleman Type}\label{sectioncar}
In this section we obtain boundedness properties of the linear operator $(\partial_t+(-1)^{k+1}\partial_x^n)^{-1}$, and its spatial derivatives up to order $n-1$, in spaces of the type $L^p-L^q$ with exponential weight $e^{\lambda x}$. We  keep our exposition simple since we only use  values of  $p$ and $q$ in the set $\{1,2,+\infty\}$.

Let $D:=\R\times[0,1]$ and, for $R\in \mathbb R$, let $D_R:=\{(x,t)\mid x\geq R\, ,\,t\in[0,1]\}$. We will denote
\[\|\;\|_{L^p_xL^q_T}:=\|\;\|_{L^p_xL^q_t(D)},\;\;\|\;\|_{L^p_{x\geq R}L^q_T}:=\|\;\|_{L^p_xL^q_t(D_R)}, \;\;\|\;\|_{L^q_TL^p_x}:=\|\;\|_{L^q_tL^p_x(D)},\;\; \text{and }\;\;\|\;\|_{L^q_TL^p_{x\geq R}}:=\|\;\|_{L^q_tL^p_x(D_R)}.\]
\begin{Theorem}\label{car} For $k\in\mathbb N$ and $n=2k+1$, let $v\in C([0,1];H^{n}(\mathbb R))\cap C^1([0,1];\ldos)$ be a function such that $supp\,v(t)\subset[-M,M]$ for all $t\in[0,1]$, for some $M>0$. Then, for $\lambda>2$ we have that
\begin{equation}
\|\el v\|\liTldx\leq C\|\el(|v(0)|+|v(1)|)\|_\ldos +C\|\el(\partial_t+(-1)^{k+1}\partial_x^{n})v\|\luTldx.\label{car1}
\end{equation}
\begin{equation}
\sum_{j=1}^{n-1}\|\el\partial_x^jv\|\lixldT\leq C\lambda^{n-1}\|( |J^{n} (\el v(1))| +|J^{n}(\el v(0))|\|_\ldos+\|\el(\partial_t+(-1)^{k+1}\partial_x^{n})v\|\luxldT,\label{car2}
\end{equation}
where $C$ is independent of $\lambda>2$ and $M$, and $(Jf)\,\widehat{\;}:= (1+|\xi|^2)^{1/2}\widehat f$.
\end{Theorem}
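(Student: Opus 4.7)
Set $g := e^{\lambda x}v$ and $F := e^{\lambda x}(\partial_t + (-1)^{k+1}\partial_x^n)v$; since $\partial_x^n(e^{-\lambda x}g) = e^{-\lambda x}(\partial_x - \lambda)^n g$, the function $g$ satisfies the conjugated PDE $\partial_t g + (-1)^{k+1}(\partial_x - \lambda)^n g = F$. The compact support of $v$ places $g(\cdot,t) \in L^2(\mathbb R)$, so the spatial Fourier transform converts the equation to the scalar ODE
\[
\partial_t \hat g(\xi,t) + p(\xi,\lambda)\hat g(\xi,t) = \hat F(\xi,t), \qquad p(\xi,\lambda) := (-1)^{k+1}(i\xi - \lambda)^n.
\]
I decompose $\mathbb R_\xi = E_+ \cup E_-$ with $E_\pm := \{\pm\operatorname{Re} p \geq 0\}$, so the propagator is contractive forward in time on $E_+$ and backward on $E_-$.

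\textbf{Proof of \eqref{car1}.} Solve the ODE forward from $t=0$ on $E_+$ and backward from $t=1$ on $E_-$. In both cases $|e^{-p(t-s)}| = e^{-\operatorname{Re} p\,(t-s)} \leq 1$ on the relevant $s$-interval, so Duhamel gives
\[
|\hat g(\xi,t)| \leq |\hat g(\xi,0)|\,\chi_{E_+}(\xi) + |\hat g(\xi,1)|\,\chi_{E_-}(\xi) + \int_0^1 |\hat F(\xi,s)|\,ds, \quad t\in[0,1].
\]
Plancherel in $\xi$ combined with Minkowski's integral inequality in $s$ yields \eqref{car1}.

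\textbf{Proof of \eqref{car2}.} Since $\partial_x^j(e^{-\lambda x}g) = e^{-\lambda x}(\partial_x-\lambda)^j g$, we have $e^{\lambda x}\partial_x^j v = (\partial_x-\lambda)^j g$, and it suffices to bound $\|(\partial_x-\lambda)^j g\|_{L^\infty_x L^2_T}$ for $j = 1,\ldots,n-1$. Decompose $g = g^{(+)} + g^{(-)} + g^{\mathrm{inh}}$ as in the previous step. For $g^{(+)}$ I apply the Kato trick: fix $x$, extend $(\partial_x-\lambda)^j g^{(+)}(x,\cdot)$ by $\chi_{[0,1]}(t)$, use Plancherel in $t$, then Cauchy--Schwarz in $\xi$ with weight $(1+\xi^2)^{-1}$. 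The elementary identity $\int_{\mathbb R} |\mathcal F_t[\chi_{[0,1]}e^{-pt}](\tau)|^2 d\tau = 2\pi\int_0^1 e^{-2\operatorname{Re} p\,t}dt \leq 2\pi$ on $E_+$, together with $|i\xi-\lambda|^{2j} = (\xi^2+\lambda^2)^j$, produces
\[
\|(\partial_x-\lambda)^j g^{(+)}(x,\cdot)\|_{L^2_T}^2 \leq C\int (1+\xi^2)(\xi^2+\lambda^2)^j |\hat g(\xi,0)|^2\,d\xi \leq C\lambda^{2(n-1)}\|J^n g(0)\|_{L^2}^2,
\]
uniformly in $x$ and $\lambda > 2$; the $g^{(-)}$ piece is symmetric with $g(1)$. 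Summing over $j$ gives the data contribution to the right-hand side of \eqref{car2}.

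\textbf{Main obstacle.} The technical heart of \eqref{car2} is the uniform-in-$\lambda$ inhomogeneous smoothing bound $\|(\partial_x-\lambda)^j g^{\mathrm{inh}}\|_{L^\infty_x L^2_T} \leq C\|F\|_{L^1_x L^2_T}$. A naive application of the Kato trick to $g^{\mathrm{inh}}$ introduces a factor $|\operatorname{Re} p(\xi,\lambda)|^{-1}$ which is non-integrable across $\partial E_+$, so one must instead run a $TT^*$ argument on the kernel of $(\partial_x-\lambda)^j U_\lambda(t)$ -- the $\lambda$-conjugated analogue of the Kenig--Ponce--Vega maximal smoothing estimate for the inhomogeneous dispersive equation. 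The key point is that the group velocity $\Psi'(\xi,\lambda) = \partial_\xi \operatorname{Im} p(\xi,\lambda)$ satisfies $|\Psi'(\xi,\lambda)| \gtrsim (\xi^2+\lambda^2)^{(n-1)/2}$ outside a bounded $\xi$-set; the change of variables $\tau = \Psi(\xi,\lambda)$ then furnishes the required smoothing in the unbounded region, while the bounded remainder is absorbed by a direct Sobolev-type estimate that is uniformly controlled for $\lambda > 2$.
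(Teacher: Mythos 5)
Your architecture is genuinely different from the paper's: you Fourier-transform in $x$ only and run Duhamel forward on $E_+=\{\operatorname{Re}p\geq 0\}$ and backward on $E_-$, whereas the paper inverts the conjugated operator through its full space--time symbol $m_j(\xi,\tau)=-i^{j+1}(\xi+i\lambda)^j/(\tau-(\xi+i\lambda)^n)$ and produces the boundary terms $v(0)$, $v(1)$ via the time cutoff $\eta_\e$. For \eqref{car1}, and for the data contribution to \eqref{car2}, your argument is complete and correct (your $E_\pm$ splitting is exactly the mechanism behind the kernel $G_{a,b}$ in \eqref{siete} and Lemma \ref{lema1}, and your Cauchy--Schwarz with weight $(1+\xi^2)^{-1}$ reproduces the $\lambda^{n-1}\|J^n(\el v(i))\|_{\ldos}$ terms).

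The gap is precisely the part you flag as the ``main obstacle'': the bound $\|(\partial_x-\lambda)^jg^{\mathrm{inh}}\|_{L^\infty_xL^2_T}\leq C\|F\|_{L^1_xL^2_T}$, uniformly in $\lambda>2$, is the entire content of the paper's Lemma \ref{lema2}, and you leave it as a sketch whose one quantitative claim is false. Writing $p(\xi,\lambda)=(-1)^{k+1}(i\xi-\lambda)^n=-i(\xi+i\lambda)^n$, one computes $\partial_\xi\operatorname{Im}p=-n\operatorname{Re}\bigl[(\xi+i\lambda)^{n-1}\bigr]$, which vanishes whenever $(n-1)\arg(\xi+i\lambda)\equiv\pi/2\ (\mathrm{mod}\ \pi)$, i.e.\ on the lines $\xi=\lambda\cot\theta_m$ with $\theta_m=(2m+1)\pi/(2(n-1))$ and $\cot\theta_m\neq0$. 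So the degeneracy set of the group velocity is \emph{not} contained in a bounded $\xi$-set: it escapes to infinity with $\lambda$, and your change of variables $\tau=\Psi(\xi,\lambda)$ is singular there with no uniform control as $\lambda\to\infty$. What actually saves the estimate at these points is that there $|\operatorname{Re}p|=|\xi|(\xi^2+\lambda^2)^{(n-1)/2}\sim\lambda^{n}$, i.e.\ the damping is strong exactly where the dispersion degenerates; this interplay between $\operatorname{Re}(\xi+i\lambda)^{n-1}$ and $\operatorname{Im}(\xi+i\lambda)^{n}$ is what the paper's partial-fraction decomposition $\theta^j/(1-\theta^n)=\sum_l c_l/(\theta-r_l)$, $\theta=(\xi+i\lambda)/\tau^{1/n}$, captures automatically: each pole contributes, via \eqref{siete}, a kernel in $x$ bounded by an absolute constant, with the regime $|\tau|\leq1$ handled separately by comparison with $(\xi+i\lambda)^{-1}$. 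To close your argument you must either prove the uniform-in-$\lambda$ and uniform-in-$\tau$ boundedness of the inverse Fourier transform in $\xi$ of $m_j(\cdot,\tau)$ as in Lemma \ref{lema2}, or supply a $TT^*$ argument that genuinely accounts for the $\lambda$-dependent critical set; as written, the key estimate is asserted rather than proved.
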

Reasoning formally, supposse that $\el(\partial_t+(-1)^{k+1}\partial^n_x)g=h$, and denote $f=\el g$ and 
$T_0=[\el (\partial_t+(-1)^{k+1}\partial^n_x)e^{-\lambda x}]^{-1}$. 
Then,  $f=T_0h$. Since $\el\partial_xe^{-\lambda x}f=(\partial_x-\lambda)f$, we have that $\el\partial_x^ne^{-\lambda x}f=(\partial_x-\lambda)^nf$, and thus the multipier operator representing $T_0$ via the Fourier transform is given by
\begin{equation}
(T_0h)\,\widehat{\;}\,(\xi,\tau)=\frac{\widehat h}{i\tau+(-1)^{k+1}(i\xi-\lambda)^n}\equiv m_0\widehat h.\end{equation}
We will write $m_0$ as 
\begin{equation}
m_0=\frac{-i}{\tau-(\xi+i\lambda)^n}.\label{tres}
\end{equation}
Since for a positive integer $j$, $\el\partial^j_xg=(\partial_x-\lambda)^jf$, we have that 
\begin{equation}\el\partial^j_xg=(\partial_x-\lambda)^jT_0h\equiv T_jh=[(i\xi-\lambda)^jm_0\widehat h]\,\widecheck{\;}=\Bigl[\frac{-i^{j+1}(\xi+i\lambda)^j\,}{\tau-(\xi+i\lambda)^n}\widehat h\Bigr]\,{}^{\textstyle{\widecheck{\;}}}\,\equiv[m_j\widehat h]\,\widecheck{\;}. \label{cuatro}\end{equation}

\begin{lemma}\label{lema1}
Let $h\in L^1(\R^2)$. Then there is $C>0$ independent of $h$ and $\lambda>0$ such that \begin{equation}
\|T_0h\|\litldx\leq C\|h\|\lutldx.
\label{cinco}\end{equation} 

\end{lemma}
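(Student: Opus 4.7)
The plan is to compute the inverse Fourier transform in $\tau$ explicitly, reducing $T_0$ to a convolution in $t$ at each fixed spatial frequency $\xi$. Starting from \eqref{tres}, I would take the partial Fourier transform in $x$ only, obtaining
\begin{equation*}
(T_0 h)\widehat{\;}^{\,\,{}_x}(\xi,t)=\bigl(K_\xi *_t \widehat h^{\,\,{}_x}(\xi,\cdot_s)\bigr)(t),\qquad z_\xi:=(\xi+i\lambda)^n,
\end{equation*}
where $K_\xi(t)$ denotes the inverse Fourier transform in $\tau$ of $m_0(\xi,\tau)=-i/(\tau-z_\xi)$. By Plancherel in $x$, the lemma reduces to showing that $\|K_\xi\|_{L^\infty_t}$ is bounded by a constant independent of $\xi\in\mathbb R$ and of $\lambda>0$.

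The hard part is the residue calculation for $K_\xi$, since $\mathrm{Im}\,z_\xi$ does \emph{not} have a definite sign. Writing
\begin{equation*}
\mathrm{Im}\,z_\xi=|\xi+i\lambda|^n\sin\bigl(n\arg(\xi+i\lambda)\bigr),
\end{equation*}
we see that for $n=2k+1\geq 3$ the argument $n\arg(\xi+i\lambda)\in(0,n\pi)$ crosses integer multiples of $\pi$, so the sine changes sign several times along the $\xi$-axis. I would split $\mathbb R_\xi$ into the two open sets $U^{\pm}:=\{\pm\mathrm{Im}\,z_\xi>0\}$ (their common complement being finite, hence of measure zero) and close the $\tau$-contour upward on $U^{+}$ and downward on $U^{-}$. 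The residue theorem then gives, up to a universal constant,
\begin{equation*}
K_\xi(t)=e^{iz_\xi t}\,\chi_{(0,\infty)}(t)\ \text{ on }U^{+},\qquad K_\xi(t)=-e^{iz_\xi t}\,\chi_{(-\infty,0)}(t)\ \text{ on }U^{-}.
\end{equation*}
On each support we have $|K_\xi(t)|=e^{-(\mathrm{Im}\,z_\xi)t}\leq 1$, which is the desired uniform bound; crucially, in both cases the exponent is nonpositive on the set where the kernel is nonzero, so no dependence on $\lambda$ is introduced.

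To conclude, I would insert $\|K_\xi\|_{L^\infty_t}\leq 1$ into Young's inequality in $t$ and combine it with Plancherel in $x$ and Minkowski's integral inequality,
\begin{equation*}
\|T_0 h(\cdot,t)\|_{L^2_x}\;\leq\;C\,\|\widehat h^{\,\,{}_x}\|_{L^2_\xi L^1_t}\;\leq\;C\,\|\widehat h^{\,\,{}_x}\|_{L^1_t L^2_\xi}\;\leq\;C\,\|h\|_{L^1_t L^2_x},
\end{equation*}
and take the supremum in $t$ to obtain \eqref{cinco}, with $C$ independent of $\lambda>0$ and $h$.
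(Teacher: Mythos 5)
Your argument is correct and is essentially the paper's own proof: the paper likewise reduces to the partial Fourier transform in $x$, computes the inverse transform in $\tau$ of $-i/(\tau+a(\xi)+ib(\xi))$ (its formula \eqref{siete} encodes exactly your contour-closing according to the sign of $\mathrm{Im}\,(\xi+i\lambda)^n$, discarding the finitely many $\xi$ where it vanishes), bounds the resulting kernel uniformly by a constant, and concludes via convolution, Plancherel, and Minkowski's integral inequality.
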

\begin{proof}
Let $a(\xi)$ and $b(\xi)$ be the real and imaginary parts of $-(\xi+i\lambda)^n$, respectively. Then
\[m_0=\frac{-i}{\tau+a(\xi)+ib(\xi)}.\]
We recall that for $a\in \R$ and $b\not=0$
\begin{equation}\bigl(\frac{1}{\tau+a+ib}\bigr)\,\widecheck{\;}^{{}_\tau}\,(t)=c\,e^{-iat}[e^{-bt}\chi_{[0,\infty]}(t)\chi_{(0,\infty)}(b)+e^{bt}\chi_{[-\infty,0)}(t)\chi_{(-\infty, 0)}(b)]=:G_{a,b}(t).\label{siete}\end{equation}
 Since $|G_{a,b}|\leq c$, by taking inverse Fourier transform in the varible $\tau$ and using convolutions, it follows that for $t\in\R$
\[
|[T_0h(t)]\,\widehat{\;}\,(\xi)|=|\int_{-\infty}^\infty G_{a(\xi),b(\xi)} (t-s)\widehat{h(s)}(\xi)\,ds|\leq C\int_{-\infty}^{\infty}|\widehat{h(s)}(\xi)|\,ds,\]
for those values of $\xi$ such that $b(\xi)\not=0$ (a finite set). In this way, applying Plancherel's identity and Minkowski's integral inequality we obtain \eqref{cinco}.
\end{proof}
\begin{lemma}\label{lema2} 
Let $h\in L^1(\R^2)$. Then there is $C>0$, independent of $h$ and $\lambda>2$, such that 
\begin{equation}
\|T_jh\|\lixldt\leq C\|h\|\luxldt\quad\text{for }  j=1,\cdots,n-1.\label{seis}\end{equation}
 
\end{lemma}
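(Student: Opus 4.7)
The plan is to reduce the spacetime estimate to a uniform pointwise bound on the partial Fourier kernel in $\xi$, and then to evaluate that kernel by closing contours in the complex $\xi$-plane and computing residues.

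First I would recast $T_j$ as a convolution in $(x,t)$ with a kernel $K_j$ whose full spacetime Fourier transform is $m_j$. For fixed $x$, Minkowski's integral inequality in the $y$-variable together with Plancherel in $t$ applied slice-wise to the convolution in $t$ gives
\[
\|T_jh(x,\cdot)\|_{L^2_t} \,\leq\, \int_{\mathbb R}\bigl\|\widehat K_j^{\,t}(x-y,\cdot)\bigr\|_{L^\infty_\tau}\,\|h(y,\cdot)\|_{L^2_t}\,dy \,\leq\, \Bigl(\sup_{x,\tau}|\widehat K_j^{\,t}(x,\tau)|\Bigr)\|h\|\luxldt,
\]
where $\widehat K_j^{\,t}(x,\tau) := \frac{1}{2\pi}\int_{\mathbb R} e^{ix\xi}m_j(\xi,\tau)\,d\xi$. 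Taking the supremum in $x$ reduces the lemma to proving $\sup_{x,\tau,\lambda>2}|\widehat K_j^{\,t}(x,\tau)| \leq C$.

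To prove this kernel bound, fix $\tau$, set $\mu = |\tau|^{1/n}$, and observe that the denominator $\tau-(\xi+i\lambda)^n$ has the $n$ simple zeros $\xi_k = \mu\omega^k - i\lambda$, $k=0,\ldots,n-1$, where the $\omega^k$ are the $n$-th roots of $\mathrm{sgn}\,\tau$ (in particular $|\omega^k|=1$). Because $j\leq n-1$ we have $|m_j(\xi,\tau)|=O(|\xi|^{-1})$ as $|\xi|\to\infty$, so Jordan's lemma lets me close the $\xi$-contour in the upper half plane for $x>0$ and in the lower half plane for $x<0$. In the first case,
\[
\widehat K_j^{\,t}(x,\tau) \,=\, i\sum_{k:\,\mathrm{Im}\,\xi_k>0}(\mathrm{Res}_{\xi_k}m_j)\,e^{ix\xi_k},
\]
and a direct calculation yields $\mathrm{Res}_{\xi_k}m_j = \frac{i^{j+1}}{n}(\mu\omega^k)^{j-n+1}$, with modulus $\mu^{j-n+1}/n$. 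The condition $\mathrm{Im}\,\xi_k>0$ forces $\mu\sin(\arg\omega^k)>\lambda$, hence $\mu>\lambda$; since $j-n+1\leq 0$ this gives $\mu^{j-n+1}\leq\lambda^{j-n+1}\leq 1$ for $\lambda>2$. Combined with $|e^{ix\xi_k}| = e^{-x\,\mathrm{Im}\,\xi_k}\leq 1$ for $x>0$, summing at most $n$ terms yields the uniform bound. The case $x<0$ is entirely analogous.

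The main obstacle is that the residue coefficient $\mu^{j-n+1}$ is not a priori bounded: for $j<n-1$ it diverges as $\mu\to 0$. The resolution is the observation that in exactly that regime, $\mu<\lambda$, no pole $\xi_k$ lies in the upper half plane, so the residue sum is vacuous; the poles cross the real axis only once $\mu$ exceeds $\lambda$, and at that moment the coefficient is already tamed by $\mu^{j-n+1}\leq\lambda^{j-n+1}\leq 1$. A secondary delicate point is the borderline case $j=n-1$, where $m_{n-1}\not\in L^1_\xi$: the residue formula then rests on Jordan's lemma for the arc at infinity rather than on absolute integrability.
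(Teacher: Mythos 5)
Your reduction to the uniform kernel bound $\sup_{x,\tau}\bigl|\widehat K_j^{\,t}(x,\tau)\bigr|\le C$ is the same one the paper uses, and your treatment of $x>0$ is correct and in fact cleaner than the paper's partial-fraction computation: the observation that an upper-half-plane pole forces $\mu=|\tau|^{1/n}>\lambda$, which then tames the residue modulus $\mu^{j-n+1}/n$, is exactly right. The gap is the final claim that ``the case $x<0$ is entirely analogous.'' It is not. For $x<0$ you close the contour in the lower half plane, and poles with $\mathrm{Im}\,\xi_k<0$ exist for \emph{every} value of $\mu$ (any root $\omega^k$ with $\mathrm{Im}\,\omega^k\le 0$ gives one; for $\mu<\lambda$ all $n$ poles lie below the axis). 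In that regime the mechanism you used for $x>0$ is unavailable: each individual residue still has modulus $\mu^{j-n+1}/n$, which for $j\le n-2$ blows up like $|\tau|^{(j+1-n)/n}$ as $\tau\to 0$, and the factors $|e^{ix\xi_k}|\le 1$ do not help. A uniform bound can only come from cancellation among the $n$ residues (for $j\le n-2$ one has $m_j=O(|\xi|^{j-n})=O(|\xi|^{-2})$ at infinity, so the full residue sum vanishes), and your argument nowhere exploits this.

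This is precisely the point where the paper inserts a separate argument. Its partial-fraction representation carries the same dangerous prefactor $|\tau|^{-(1-(j+1)/n)}$, and for $|\tau|\le 1$ it abandons the pole-by-pole estimate: for $j=n-1$ it subtracts the explicit term $i^{n}/(\xi+i\lambda)$, whose inverse transform is bounded uniformly in $\lambda$, and checks that the remainder is dominated by $2|\xi+i\lambda|^{-n-1}\in L^1_\xi$ with norm uniform in $\lambda>2$; for $j\le n-2$ the analogous (easier) step is that $m_j$ itself satisfies $|m_j|\le 2|\xi+i\lambda|^{j-n}$ when $|\tau|\le 1$, so $\|m_j(\cdot,\tau)\|_{L^1_\xi}\le C\lambda^{j-n+1}\le C$. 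To repair your proof you need either such a direct small-$|\tau|$ estimate of the kernel, or a quantitative version of the residue cancellation in the lower-half-plane sum when $\mu$ is small. As written, the $x<0$ half of the kernel bound is unproved.
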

\begin{proof}
From \eqref{tres} and \eqref{cuatro} 
\begin{equation}
(T_jh)\,\widehat{\;}\,=(i\xi-\lambda)^jm_0\widehat h=\frac{-i^{j+1}(\xi+i\lambda)^j\,\widehat h}{\tau-(\xi+i\lambda)^n}=m_j\widehat h.\label{ocho}\end{equation}
Let us denote $\theta:=(\xi+i\lambda)/\tau^{1/n}$. Then
\begin{equation}
m_j=\frac{C}{\tau^{1-j/n}}\frac{\theta^j}{1-\theta^n}=\frac{C}{\tau^{1-j/n}}\sum_{l=1}^n\frac{c_l}{\theta-r_l},
\end{equation}
where $r_l:=a_l+ib_l$, $l=1,\cdots,n$, are the $n^{\text{th}}$-roots of $ 1$, and the $c_l$ can be computed by L'Hopital's rule to obtain that
\[c_l=\lim_{\theta\to r_l}\frac{(\theta-r_l)\theta^j}{1-\theta^n}=-\frac{1}{nr_l^{n-j-1}}.\]
Therefore
\[m_j=\frac{C}{\tau^{1-(j+1)/n}}\sum_{l=1}^n\frac{c_l}{\xi+i\lambda-\tau^{1/n}r_l}=\frac{C}{\tau^{1-(j+1)/n}}\sum_{l=1}^n\frac{c_l}{\xi-\tau^{1/n}a_l+i(\lambda-\tau^{1/n}b_l)}.
\]
Taking the inverse Fourier transform with respect to the variable $\xi$, observing that for fixed $\lambda$, $\lambda-\tau^{1/m}b_l\not=0$ for all $l$,  except for a finite number of values of $\tau$, and applying \eqref{siete} (with $\xi$ and $x$ instead of $\tau$ and $t$, respectively) we obtain a collection of bounded functions $G_1,\cdots,G_l$ of $x$ and $\tau$ such that
\begin{equation*} [m_j(\cdot_{\xi},\tau)]\,\widecheck{\;}^{{}_{\,\xi}}\,(x)=\frac{C}{\tau^{1-(j+1)/n}}\sum_{l=1}^n c_l\,G_l(x,\tau).\end{equation*}
If $|\tau|>1$, then it is clear that 
\begin{equation}
	|[m_j(\cdot,\tau)]\,\widecheck{\:}^{{}_{\,\xi}}\,(x)|\leq C,\label{inve}
\end{equation}
with $C$ independent of $\lambda$, $x$ and $|\tau|>1$.
 We can use \eqref{ocho} to prove that this function is bounded also for $|\tau|\leq 1$. To do this we will consider only the case $j=n-1=2k$, the other cases being similar. Let us observe from \eqref{cuatro} that 
 \begin{equation*}\bigl |m_{2k}-\frac{i^{n}}{\xi+i\lambda}\bigr |=\bigl|\frac{(\xi+i\lambda)^{n-1}}{\tau-(\xi+i\lambda)^n}+\frac1{\xi+i\lambda}\bigr|=\bigl|\frac\tau{(\tau-(\xi+i\lambda)^n)(\xi+i\lambda)}\bigr|\leq \frac2{|\xi+i\lambda|^{n+1}}\in L_\xi^1,\end{equation*}
 since $\lambda>2$ and $|\tau|\leq 1$. From the Fourier inversion formula it can be seen that $|[|\xi+i\lambda|^{-n-1}]\,\widecheck{\;}(x)|\leq C\,$, with $C$ independent of $\lambda>2$. Thus, by taking inverse Fourier transform with respect to the variable $\xi$  and taking into account that from \eqref{siete} $[(\xi+i\lambda)^{-1}]\,\widecheck{\;}^{{}_{\,\xi}}\,(x)$ is a bounded function of $x$, with bound independent of $\lambda$, we see, together with the estimate already obtained for $|\tau|\leq 1$,  that \eqref{inve} is  valid for all $x$ and all but a finte number of values of $\tau$.
 
 Hence, we can apply basic properties of convolution and Plancherel's identity to conclude that 
 \[\|T_{2k}h\|\lixldt\leq C\|h\|\luxldt,\]
 which concludes the proof of Lemma \ref{lema2}.
 
\end{proof}

We now proceed to prove Theorem \ref{car}.

\textit{ Proof of Theorem \ref{car}:}

We extend $v$ to all $t\in\R$ with value zero in $\R-[0,1]$, and call this extension again $v$. For $\varepsilon>0$, we consider a function $\eta:=\eta_\e\in C^\infty(\R_t)$ such that $\eta_\e=0$ in $\R-[0,1]$, $\eta_\e=1$ in $[\e,1-\e]$, $\eta'\geq0$ in $[0,\e]$,   $\eta'\leq0$ in $[1-\e,1]$, and $\eta_{\epsilon'}\leq \eta_\e$ if $\e'<\e$. Define $g:=\eta_\e(\cdot_t)v$. Then 

\begin{equation*}
\el(\partial_t+(-1)^{k+1}\partial_x^n)g=\el\eta_\e'v+\eta_\e\el(\partial_t+(-1)^{k+1}\partial_x^n)v\equiv h_{1,\e}+h_{2,\e}\equiv h_1+h_2.\end{equation*}
Then,  from \eqref{cuatro}, 
\begin{equation}\el\partial^j_xg=T_jh_1+T_jh_2\quad \text{ for }j=0,\cdots,n-1.\label{docea}\end{equation}
From Lemma \ref{lema2},
\begin{equation}
\|T_jh_2\|\lixldt\leq C\|h_2\|\luxldt=C\|\eta_\e\el(\partial_t+(-1)^{k+1}\partial_x^n)v\|\luxldt.\label{doce}
\end{equation}
For $T_jh_1$,  we see from \eqref{cuatro} that  $(T_jh_1)\,\widehat{\;}=C(\xi+i\lambda)^j(T_0h_1)\,\widehat{\;}\,$, and apply  the Sobolev embedding from $H^1(\R)$ to $L^\infty(\R)$, Plancherel's identity, and Lema \ref{lema1} to conclude that
\begin{align}
\|T_jh_1\|\lixldT &\leq C\|JT_jh_1\|\ldTldx\leq C\|(1+|\xi|)(|\xi|^j+ \lambda^j)(T_0h_1)\,\widehat{\;}{\,}^x\|{{}_{L^2_T L^2_\xi}}\notag\\
&\leq C\|(1+\lambda)^j(1+|\xi|)^{j+1}(T_0h_1)\,\widehat{\;}{\,}^x\|{{}_{L^2_T L^2_\xi}} \leq C(1+\lambda)^j\|T_0J^{j+1}h_1\|{{}_{L^\infty_t L^2_x}}\notag\\
&\leq C\lambda^j\|J^{j+1}h_1\|\lutldx=C\lambda^j\|\eta_\e' J^{j+1}(\el v)\|\lutldx\leq C\lambda^{n-1}\|\eta_\e' J^{n}(\el v)\|\lutldx.\label{trece}
\end{align} 
Hence, from \eqref{docea}, \eqref{doce}, and \eqref{trece} we have that 
\begin{equation*}\|\eta_\e \el\partial^j_x v  \|\lixldT\leq C\lambda^{n-1}\|\eta_\e' J^{n}(\el v)\|\lutldx+ C\|\eta_\e\el(\partial_t+(-1)^{k+1}\partial_x^n)v\|\luxldt.\end{equation*}
We now make $\e\to 0$ in this inequality and apply Fatou's Lemma on the left-hand side and the monotone convergence theorem in the second term of the right-hand side.  For the first term on the right-hand side we use the fact that $|\eta'_{\e}|$ acts as an  approximation of the  identity  on each one of the time intervals $(0,\e)$ and $(1-\e,1)$. Thus,  we obtain  \eqref{car2} after adding up in $j$. 

The proof of \eqref{car1} is similar but we use Lemma \ref{lema1} instead of Lemma \ref{lema2}. This completes the proof of Theorem \ref{car}.\qed
\section{Lower estimates}\label{sectionlow}
In this section we prove that the $L^2$-norm of $w$ in a small rectangle $Q=[0,1]\times[r,1-r]$, with $r\in(0,1)$ can be bounded by a multiple of the $H^{n-1}$ norm of $w$ in a distant rectangle $[R,R+1]\times[0,1]$. 
\begin{lemma}\label{lema3}
Let $\phi\in C^\infty([0,1])$ be a function with $\phi(0)=\phi(1)=0$, and for $R>1$ and $ \alpha>0$ define $\psi_\alpha(x,t):=\psi(x,t):=\alpha(\frac xR+\phi(t))^2$, $x\in\R$,   $t\in[0,1]$. For $n=2k+1$, suppose that $g\in C([0,1];H^n(\R))\cap C^1([0,1];L^2(\R))$ is such that $g(0)=g(1)=0$ and  the support of $g$ is contained in the set
\begin{equation*}
A_{1,5}:=\{(x,t)\mid t\in[0,1]\,,\;1\leq \frac xR+\phi(t)\leq 5\}.
\end{equation*}
Then, there is a  constant $C=C(n)>0$  and a constant $\overline C=\overline C(n,\|\phi'\|_{L^\infty},\|\phi''\|_{L^\infty})>1$ such that
\begin{equation}
\sum_{j=0}^{n-1}\frac{\alpha^{n-j-\frac12}}{R^{n-j}}\|e^{\psi_\alpha}\partial_x^jg\|\leq C\|e^{\psi_\alpha}(\partial_t+(-1)^{k+1}\partial_x^n)g\|\quad\text{for all }\alpha>\overline CR^{n/(n-1)},\label{quince}
\end{equation}
where $\|\cdot\|:=\|\cdot\|_{L^2(D)}$.
\end{lemma}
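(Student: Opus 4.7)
The plan is to prove Lemma \ref{lema3} by the standard conjugation--splitting technique for Carleman estimates. First, set $f := e^{\psi}g$ and observe that the claim is equivalent to
$$\sum_{j=0}^{n-1}\frac{\alpha^{n-j-\frac12}}{R^{n-j}}\|\partial_x^j f\| \leq C\|L_\psi f\|,$$
where $L_\psi := e^{\psi}(\partial_t+(-1)^{k+1}\partial_x^n)e^{-\psi} = (\partial_t - \psi_t) + (-1)^{k+1}(\partial_x - \psi_x)^n$. Since $g(0)=g(1)=0$ and $g$ has compact $x$-support inside $A_{1,5}$, the same holds for $f$, so no boundary contributions arise from integrations by parts on $D$.

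Next I would split $L_\psi = \mathcal S + \mathcal A$ into its symmetric and skew-symmetric parts in $L^2(D)$. Using $n=2k+1$ and $(\partial_x - \psi_x)^* = -(\partial_x + \psi_x)$,
\begin{align*}
\mathcal S &= -\psi_t + \tfrac{(-1)^{k+1}}{2}\bigl[(\partial_x - \psi_x)^n - (\partial_x + \psi_x)^n\bigr], \\
\mathcal A &= \partial_t + \tfrac{(-1)^{k+1}}{2}\bigl[(\partial_x - \psi_x)^n + (\partial_x + \psi_x)^n\bigr],
\end{align*}
so $\mathcal S$ contains only odd powers of $\psi_x$ and $\mathcal A$ only even powers. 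The identity
$$\|L_\psi f\|^2 = \|\mathcal S f\|^2 + \|\mathcal A f\|^2 + \langle [\mathcal S,\mathcal A] f, f \rangle$$
reduces matters to bounding the weighted derivative norms from below by $\langle [\mathcal S,\mathcal A] f, f\rangle$.

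For the commutator I would exploit the very rigid structure of $\psi$: on $\mathrm{supp}\,g$ one has $\psi_x = (2\alpha/R)(x/R+\phi) \geq 2\alpha/R > 0$, while $\psi_{xx} = 2\alpha/R^2$ and $\partial_x^j\psi \equiv 0$ for $j\geq 3$. Each factor of $\psi_x$ therefore contributes a size $\alpha/R$, and every commutation of a $\partial_x$ across a $\psi_x$ produces a factor $\alpha/R^2$. Expanding $[\mathcal S,\mathcal A]$ and integrating by parts repeatedly --- the derivatives $\partial_t\psi_x = 2\alpha\phi'/R$ and $\partial_t\psi_t$ are the source of the dependence of $\overline C$ on $\|\phi'\|_\infty$ and $\|\phi''\|_\infty$ --- the goal is an identity of the form
$$\langle [\mathcal S,\mathcal A]f,f\rangle \;\geq\; c(n)\sum_{j=0}^{n-1}\frac{\alpha^{2(n-j)-1}}{R^{2(n-j)}}\|\partial_x^j f\|^2 \;-\;\mathcal{E}(f),$$
where every error term in $\mathcal{E}(f)$ carries a strictly smaller power of the dimensionless ratio $\alpha/R^{n/(n-1)}$ than the main term at the corresponding derivative level. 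The threshold $\alpha > \overline C R^{n/(n-1)}$ is then precisely what is needed to absorb $\mathcal{E}(f)$ into the main term, yielding the claimed inequality after passing back to $g = e^{-\psi}f$.

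The main obstacle is the combinatorial bookkeeping in the commutator step. For general odd $n$, $(\partial_x - \psi_x)^n$ is a noncommuting sum of order $2^n$ monomials, and $[\mathcal S,\mathcal A]$ generates still more cross terms once $\partial_t$ is commuted through them. Identifying, level by level in $j$, the correct positive leading contribution $\alpha^{2(n-j)-1}R^{-2(n-j)}\|\partial_x^j f\|^2$ and verifying that every remaining cross term is strictly subleading relative to the threshold $\alpha \gg R^{n/(n-1)}$ requires exactly the organized scheme the authors advertise as the main novelty of their work.
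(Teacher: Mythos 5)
Your strategy is the same one the paper follows: conjugate by $e^{\psi}$, split the conjugated operator into its symmetric and antisymmetric parts, extract positivity from the cross term $2\langle \mathcal Sf,\mathcal Af\rangle=\langle[\mathcal S,\mathcal A]f,f\rangle$, exploit $\psi_{xxx}=0$ so that integrations by parts only generate powers of $\psi_x\sim\alpha/R$ and $\psi_{xx}=2\alpha/R^{2}$, and absorb everything subleading under the threshold $\alpha>\overline C R^{n/(n-1)}$. The gap is that the proposal stops exactly where the real work begins. The inequality
\[
\langle[\mathcal S,\mathcal A]f,f\rangle\;\geq\;c(n)\sum_{j=0}^{n-1}\frac{\alpha^{2(n-j)-1}}{R^{2(n-j)}}\|\partial_x^{j}f\|^{2}-\mathcal E(f)
\]
is only stated as a goal, and its validity hinges on the leading coefficient at each derivative level $j$ being \emph{strictly positive}. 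This is not automatic: for each $j$ the top-order contribution is a signed sum over all pairings of an even-order block of $\mathcal S$ with an odd-order block of $\mathcal A$, which after integration by parts collapses to the combinatorial quantity
\[
A_j^{n}=(-1)^{j}\kern-8pt\sum_{\substack{k_1+k_2=j\\0\leq k_1,k_2\leq k}}\kern-8pt(2k_2+1-2k_1)\binom{n}{2k_1}\binom{n}{2k_2+1},
\]
and nothing in your argument rules out cancellation among these signed terms. The paper needs a separate lemma, proved by a generating-function computation with $h_\beta(x)=\frac14\bigl((1+\beta x)^{n}+(1-\beta x)^{n}\bigr)\bigl((1+\beta/x)^{n}-(1-\beta/x)^{n}\bigr)$, to establish $A_j^{n}=n\binom{n-1}{j}>0$. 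Without that identity the proof does not close; with it, your absorption scheme goes through as described.

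A second, smaller inaccuracy: the left-hand side of \eqref{quince} involves $e^{\psi}\partial_x^{j}g=(\partial_x-\psi_x)^{j}f$, not $\partial_x^{j}f$, so the ``equivalence'' asserted in your first display is not literal. One first proves the estimate for $\|\partial_x^{j}f\|$ and then converts, descending from $j=n-1$ down to $j=1$, using
\[
\int\bigl((\partial_x-\psi_x)^{m}f\bigr)^{2}\leq\int(\partial_x^{m}f)^{2}+C\sum_{j=0}^{m-1}\frac{\alpha^{2(m-j)}}{R^{2(m-j)}}\int(\partial_x^{j}f)^{2}
\]
and re-absorbing the correction terms into the lower levels; this step again consumes a fixed fraction of the positive coefficients $A_j^{n}$, so it too depends on their positivity. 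It is routine but not free.
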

\begin{proof}
Let $f:=e^\psi g$ and observe that $e^\psi\partial_x^j g=e^\psi\partial_x^je^{-\psi}f=(\partial_x-\psi_x)^jf$, and $e^\psi\partial_t g=(\partial_t-\psi_t)f$. Therefore, if we denote $$T:= e^\psi(\partial_t+(-1)^{k+1}\partial_x^n)e^{-\psi}=(\partial_t-\psi_t)+(-1)^{k+1}(\partial_x-\psi_x)^n,$$ then, to prove the inequality in \eqref{quince} we must prove that
\begin{equation}
\sum_{j=0}^{n-1}\frac{\alpha^{n-j-\frac12}}{R^{n-j}}\|(\partial_x-\psi_x)^jf\|\leq C\|Tf\|.\label{dxmb}
\end{equation}
Let $B:=-\psi_x=-\frac{2\alpha} R\varphi$, where $\varphi(x,t):=\frac xR+\phi(t).$ We will study the operator $(\partial_x-\psi_x)^n=(\partial_x+B)^n$ in the following manner:

Each one of the terms in the expansion of this operator is of the form $T_1\cdots T_n $, where each $T_i$ is either $\partial_x$ or $B$. For $m$ and $l$ with $m+l=n$ we will denote by $[m,l]$ the sum of the terms $T_1\cdots T_n$ in this  expansion with $T_i=\partial_x$ for $m$ values of $i$, and $T_i=B$ for $l$ values of $i$. The number of terms of  $[m,l]$ in the binomial expansion of  $(\partial_x+B)^n$ is then given by $\binom nl=\binom nm:=n!/m!\,l!$.
Applying integration by parts in $D$, for the class of functions satisfying the hypotheses given for $g$, we observe that $[m,l]$ is a symmetric operator if $m$ is even and is an antisymmetric operator if $m$ is odd.

In this way, we write
\begin{equation}
(-1)^{k+1}T=(-1)^{k+1}(\partial_t-\psi_t)+\sum_{\substack{0\leq m\leq n\\l+m=n}}[m,l]:=S+A,
\end{equation}
where 
\begin{equation}\begin{array}{r c c c c c c c c c c l}
S&=&[n-1,1]&+&[n-3,3]&+&\cdots&+&[2,n-2]&+&[\,0\,,\,n\,]+(-1)^k\psi_t&\equiv S_1+(-1)^k\psi_t,\\
A&=&[\,n\,,\,0\,]&+&[n-2,2]&+&\cdots&+&[3,n-3]&+&[1,n-1]-(-1)^k\partial_t&\equiv A_1-(-1)^k\partial_t\label{SAdos}.
\end{array}
\end{equation}
Let us denote by $\langle\cdot,\cdot\rangle$ the inner product in the (real) space $L^2(D)$. Then
\begin{equation}
\|Tf\|^2= \langle (S+A)f,(S+A)f\rangle=\|Sf\|^2+\|Af\|^2+2\langle Sf,Af\rangle \geq 2\langle Sf,Af \rangle.\label{Te}\end{equation}
Now,
\begin{equation}
\langle Sf  ,  Af \rangle=\langle S_1f   ,  A_1f \rangle- (-1)^k\langle S_1f   ,  \partial_tf \rangle+(-1)^k\langle \psi_t f  ,  A_1f \rangle-\langle \psi_tf  , \partial_tf  \rangle.\label{suno}\end{equation}
We will now estimate each one of the four terms on the right hand side of \eqref{suno}.

To estimate $\langle S_1f,A_1f\rangle$ we observe that this product is a sum of terms of the form $\langle [m,n-m]f,[r,n-r]f\rangle$, with $m$ even and $r$ odd, say $m=2k_1$, $r=2k_2+1$, $k_1, k_2\in\{0,\cdots,k\}$. Using the fact that $B_{xx}=\psi_{xxx}=0$, we can apply integration by parts to obtain that
\begin{equation}
\langle [m,n-m]f,[r,n-r]f\rangle=\sum_{j=0}^{k_1+k_2}\int_DP_{k_1,k_2,j}(B,B_x)(\partial_x^jf)^2
 \end{equation}
where  $P_{k_1,k_2,j}(B,B_x)=c_{k_1,k_2,j}B^{(n-m)+(n-r)-\nu}B_x^\nu$, with $\nu+2j=m+r$.
Since $B=-\psi_x=-\frac{2\alpha}R\varphi$, $B_x=-\psi_{xx} =-\frac{2\alpha}{R^2}$, we have that
\begin{align} P_{k_1,k_2,j}(B,B_x)&=-c_{k_1,k_2,j}(2\alpha)^{2n-m-r}
\frac{\varphi^{2n-m-r-\nu}}{R^{2n-m-r+\nu}} \notag\\
&=-(2\alpha)^{2n-2k_1-2k_2-1}c_{k_1,k_2,j}\frac{\varphi^{2n-4k_1-4k_2+2j-2}}{R^{2n-2j}} \equiv \alpha^{2n-2k_1-2k_2-1}Q_{k_1,k_2,j}\label{p}
\end{align} 
 Therefore, 
 \begin{align} 
 \langle S_1f,A_1f\rangle&=\sum_{k_1,k_2=0}^k\langle[2k_1,n-2k_1]f,[2k_2+1,n-2k_2-1]f\rangle\notag\\
 &=\sum_{k_1,k_2=0}^k\alpha^{2n-2k_1-2k_2-1}\sum_{j=0}^{k_1+k_2}\int Q_{k_1,k_2,j}(\partial_x^jf)^2\notag\\
 &=\sum_{j=0}^{2k}\sum_{\substack{k_1+k_2\geq j\\0\leq k_1,k_2\leq k}}\alpha^{2n-2k_1-2k_2-1}\int Q_{k_1,k_2,j}(\partial_x^jf)^2.\label{mayorj}
 \end{align}
 For each $j$ in the former expression we will separate the term having the highest power of $\alpha$. Thus we write,
 \begin{equation}
 \langle S_1f,A_1f\rangle=\kern-2pt\sum_{j=0}^{2k}\kern-1pt\alpha^{2n-2j-1}\kern-15pt\sum_{\substack{k_1+k_2=j\\0\leq k_1,k_2\leq k}}\kern-5pt\int Q_{k_1,k_2,j}(\partial_x^jf)^2+\sum_{j=0}^{2k}\sum_{\substack{k_1+k_2> j\\0\leq k_1,k_2\leq k}}\kern-5pt\kern-6pt\alpha^{2n-2k_1-2k_2-1}\int Q_{k_1,k_2,j}(\partial_x^jf)^2\equiv\sum_{j=0}^{2k} I_j+\sum_{j=0}^{2k} II_j.\label{inf3}\end{equation}
We will concentrate upon the terms $I_j$ and will refer to the terms $II_j$ as lower order terms (l.o.t.).

For each $j$ we will now compute the term $I_j$. If  $m\leq n$ and $l=n-m$, then $[m,l]$ is a sum of operators of the form $T=T_1T_2\cdots T_n$  where $T_i=\partial_x$ for $m$ indices $i$, and $T_i=B$ for the remaining $l$ indices $i$.  We apply the product rule for derivatives to expand $T$ and consider the terms  in its expansion  containing the derivatives of highest order: $\partial^m_x$ and $\partial^{m-1}_x$. This leads to (see the illustration below)
\begin{equation}T=T_1\cdots T_n=B^l\partial_x^m+[B^{r_1}(\partial_xB^{l-r_1})+B^{r_2}(\partial_xB^{l-r_2})+\cdots+B^{r_m}(\partial_xB^{l-r_m})]\partial_x^{m-1}+\text{l.d.t.},\label{T}\end{equation}
where $r_1,r_2,\cdots r_n$ depend upon the position of the $m$ operators $\partial_x$ in the expression of $T$, and the notation l.d.t. stands for ``lower derivative terms".

To illustrate this, let us take for example the case with $n=9$, $m=3$, $l=6$, and consider the operator 
\begin{align*}T&=B\partial_xB\partial_xBB\partial_xBB. \;\text{ Then,}\\
T&=B^6\partial_x^3+B^4(\partial_xB^2)\partial_x^2+B^2(\partial_xB^4)\partial_x^2+B(\partial_xB^5)\partial_x^2+\text{l.d.t.}
\end{align*}
But, the operator  $T^{(*)}:=T_n\cdots T_2 T_1$,  is also present in the expansion of $[m,l]$, and 
\begin{equation} T^{(*)}=T_n\cdots T_1=B^l\partial_x^m+[B^{l-r_1}(\partial_xB^{r_1})+B^{l-r_2}(\partial_xB^{r_2})+\cdots+B^{l-r_m}(\partial_xB^{r_m})]\partial_x^{m-1}+\text{l.d.t.}\label{T*}\end{equation} 
 Therefore, if $T\not=T^*$ we have from \eqref{T} and \eqref{T*}  that
 \begin{equation} T+T^{(*)}=2B^l\partial_x^m+m(\partial_xB^l)\partial_x^{m-1}+\text{l.d.t.}\label{TT}\end{equation} 
 It can be seen that if $T=T^{(*)}$, then  the same expression is valid.
 Since there are ${\binom nm}$ terms in the expansion of $[m,l]$ we conclude from \eqref{TT} that
 \begin{equation}
 [m,l]={\binom nm}[B^l\partial^m_x+\frac {ml}2B^{l-1}B_x\partial_x^{m-1}]+\text{l.d.t.}\label{ml}\end{equation}
 In this way, for $m=2k_1$, $r=2k_2+1$, $j=k_1+k_2=\frac12(m+r-1)$, $l=n-m$, $s=n-r$, we apply integration by parts to observe that  \begin{align*}
 \langle[m,l]f,[r,s]f \rangle&={\textstyle{{\binom nm}{\binom nr}} }\Bigl(\int B^{l+s}\partial_x^mf\,\partial_x^rf+\frac{sr}2 \int B^{l+s-1}B_x\,\partial_x^mf\,\partial_x^{r-1}f
 +\frac{ml}2 \int B^{l+s-1}B_x\,\partial_x^{m-1}f\partial_x^{r}f\Bigr) +\text{l.o.t.}\\
 &=\frac12{\textstyle{{\binom nm}{\binom nr}}} (-1)^{\frac12(m+1-r)}\Bigl((m-r)(l+s)+{rs}-{ml}\Bigr)\int B^{l+s-1}B_x(\partial_x^{\frac12(m+r-1)}f)^2+\text{l.o.t.}\\
 &=\frac12{\textstyle{{\binom nm}{\binom nr}}}(-1)^{k_1-k_2}n(m-r)\int B^{l+s-1}B_x(\partial_x^{j}f)^2+\text{l.o.t.}\\
 &=(2\alpha)^{2n-2j-1}{\textstyle{\frac12}}{\textstyle{{\binom nm}{\binom nr}}}(-1)^{j}n(m-r)\int( \frac\varphi R)^{2n-2j-2}\,\frac{-1}{R^2}\,(\partial_x^{j}f)^2+\text{l.o.t.}
 \end{align*}
 According to the definition of $I_j$ in \eqref{inf3}, and from the first equality in \eqref{mayorj},  to obtain $I_j$ we add the high order terms terms in the former expression with $k_1+k_2=j$  and $0\leq k_1,k_2\leq k$. In this way, we obtain that
 \begin{equation}
 I_j= {A^n_j\,{\frac {n}2}}\,\frac{(2\alpha)^{2n-2j-1}}{R^{2n-2j}}\int \varphi^{2n-2j-2}(\partial_x^{j}f)^2,\label{high}
 \end{equation}
 Where 
 \begin{equation}A^n_j:=(-1)^{j}\sum_{\substack{k_1+k_2=j\\ 0\leq k_1,k_2\leq k}}
 (2k_2+1-2k_1){\binom n{2k_1}}{\binom n{2k_2+1}}.\label{Asubnj2}
 \end{equation}
  
 We will prove in Lemma \ref{lemaaj} below that
  \begin{equation}A_j^n=n{\binom{n-1}j}\geq n,\quad \text{for all integers } n\geq 3\;\text{odd and all }j\leq n-1,\label{comb1}\end{equation}
  and in particular all the coefficients $A_j^n$ are positive.
  
 Regarding the lower terms $II_j$, we see from \eqref{inf3} and \eqref{p} that 
 \begin{equation}II_j=-\sum_{\substack{k_1+k_2> j\\0\leq k_1,k_2\leq k}}c_{k_1,k_2,j} \frac{(2\alpha)^{2n-2(k_1+k_2)-1}}{R^{2n-2j}}\int \varphi^{2n-4k_1-4k_2+2j-2}(\partial^j_xf)^2,\label{low}
 \end{equation}
 Thus, from \eqref{inf3}, using \eqref{high} and \eqref{low} we have that
 \begin{align}
 \langle S_1f,A_1f\rangle&=\sum_{j=0}^{2k}{A^n_j\,{\frac {n}2}}\,\frac{(2\alpha)^{2n-2j-1}}{R^{2n-2j}}\int \varphi^{2n-2j-2}(\partial_x^{j}f)^2\notag
 \\&-\sum_{j=0}^{2k}\sum_{\substack{k_1+k_2> j\\0\leq k_1,k_2\leq k}}c_{k_1,k_2,j} \frac{(2\alpha)^{2n-2(k_1+k_2)-1}}{R^{2n-2j}}\int \varphi^{2n-4k_1-4k_2+2j-2}(\partial^j_xf)^2.\label{IjmasIIj}
 \end{align}
 We now turn our attention to the product $\langle \psi_tf,A_1f\rangle$ in \eqref{suno}.  For $r=2k_1+1$, $k_1=0,\cdots,k$ and $s=n-r$, taking into account that $\psi_{txx=0}$, and using integration by parts we see that
 \begin{equation}
 \langle [r,s]f,\psi_tf]\rangle=\sum_{j=0}^{k_1}\int P_{k_1,j}(B,B_x,\psi_t,\psi_{tx})(\partial^j_xf)^2,\label{ers}
 \end{equation}
 where 
 \begin{equation}P_{k_1,j}(B,B_x,\psi_t,\psi_{tx})=c'_{k_1,j}B^{s-\nu}B_x^{\nu}\psi_t+c''_{k_1,j}B^{s-(\nu-1)}B_x^{\nu-1}\psi_{tx}\quad \text{with }\nu+2j=r.\notag\end{equation}
 Since $\psi_t=2\alpha(\frac xR+\phi)\phi'=2\alpha\varphi\phi'$ and $\psi_{tx}=2\alpha\frac{\phi'}R$, we have that
 \begin{align*}P_{k_1,j}(B,B_x,\psi_t,B_t)&=(2\alpha)^{s+1}\Bigl(c'_{k_1,j}\frac{\varphi^{s-\nu}}{R^{s-\nu}}\frac1{R^{2\nu}}\,\varphi\phi'+  c''_{k_1,j}\frac{\varphi^{s-\nu+1}}{R^{s-\nu+1}}\frac1{R^{2\nu-2}}\frac{\phi'}R\Bigr)\\
 &\equiv c_{k_1,j}\,\alpha^{s+1}\frac{\varphi^{s-\nu+1}}{R^{s+\nu}}\phi'= \alpha^{n-2k_1}\,c_{k_1,j}\frac{\varphi^{n-4k_1+2j-1}}{R^{n-2j}}\phi'\equiv \alpha^{n-2k_1}Q_{k_1,j}.
 \end{align*}
 In this way, from \eqref{ers} and proceeding as we did to obtain \eqref{mayorj},
 \begin{align}
 \langle \psi_tf,A_1f\rangle&=\sum_{k_1=0}^k \langle [2k_1+1, n-(2k_1+1)]f,\psi_tf\rangle\notag
 \\
 &=\sum_{k_1=0}^k \alpha^{n-2k_1}\sum_{j=0}^{k_1}\int Q_{k_1,j}(\partial^j_xf)^2=\sum_{j=0}^{k}\sum_{k_1=j}^k\alpha^{n-2k_1}\int Q_{k_1,j}(\partial^j_xf)^2\notag\\
 &=\sum_{j=0}^{k}\alpha^{n-2j}\int Q_{j,j}(\partial^j_xf)^2+\sum_{j=0}^k\sum_{k_1=j+1}^k\alpha^{n-2k_1}\int Q_{k_1,j}(\partial^j_xf)^2\notag
 \\&=\sum_{j=0}^{k}c_{j,j}\frac{\alpha^{n-2j}}{R^{n-2j}}\int \varphi^{n-2j-1}\phi'(\partial^j_xf)^2+\sum_{j=0}^k\sum_{k_1=j+1}^kc_{k_1,j}\frac{\alpha^{n-2k_1}}{R^{n-2j}}\int \varphi^{n-4k_1+2j-1}\phi'(\partial^j_xf)^2.\label{psisubt}
 \end{align}
 To estimate the term $\langle S_1f,\partial_t f\rangle $ in \eqref{suno} we notice that, since $S_1$ is the sum of operators of the form $T_1,\cdots T_n$ as described above,  from the product rule for differentiation we see that $\partial_t[(S_1f)]=S_{1t}f+ S_1\partial_t f$, where $S_{1t}$ is  a sum of compositions of the form $Q_1Q_2\cdots Q_n$ where one of the $Q_i's$ is $B_t$ and the others are either $B$ or $\partial_x$. In this way, by applying  integraton by parts we conclude that
$$\langle S_1f   ,  \partial_tf \rangle=-\langle S_{1t}f+S_1\partial_tf,f\rangle=-\langle S_{1t}f,f\rangle-\langle \partial_tf, S_1f\rangle.$$
Therefore $\langle S_1f   ,  \partial_tf \rangle=-\frac12\langle S_{1t}f,f\rangle$.
Proceeding as we did to obtain \eqref{ers} to \eqref{psisubt} we see that  $\langle S_{1t}f,f\rangle$ has the same form as the right hand side of \eqref{psisubt}.

To conclude the computation of \eqref{suno} we see that
\begin{equation}\langle \psi_t f,\partial_t f\rangle=-\frac12\int\psi_{tt}f^2=-\alpha\int ((\phi')^2+\varphi\phi'')f^2.\label{fini}
\end{equation}
We return to \eqref{suno} and compare the terms of the form $\frac{\alpha^N}{R^M}$ (for diverse integer powers $N$ and $M$) in  \eqref{IjmasIIj},  \eqref{psisubt}, and \eqref{fini}, especially,   we compare the highest order terms 
$$\frac{\alpha^{2n-2j-1}}{R^{2n-2j}}\quad \text{(in \eqref{IjmasIIj})}\quad\text{and} \quad \frac{\alpha^{n-2j}}{R^{n-2j}}\quad\text{(in \eqref{psisubt})}.
$$ 
Since from the hypotheses of the lemma,   $1\leq\varphi\leq5$ in the support of $f$, we conclude that there is a constant $\overline C=\overline C(n, \|\phi'\|_{L^\infty}+\|\phi''\|_{L^\infty})>1$ such that if we take $\alpha>\overline CR^{n/(n-1)}$, then, the  leading terms  in \eqref{IjmasIIj} with coefficient $\frac12 nA_j^n(2\alpha)^{2n-2j-1}/R^{2n-2j}$ absorb the other terms appearing in \eqref{IjmasIIj},  as well as all terms in \eqref{psisubt}, and \eqref{fini}. Therefore, from \eqref{suno} and \eqref{Te} we have that
\begin{equation}
\|Tf\|^2\geq 2\langle Sf,Af\rangle\geq C\sum_{j=0}^{n-1} A^n_j \frac{\alpha^{2n-2j-1}}{R^{2n-2j}}\int(\partial_x^jf)^2,\label{SfAf}
\end{equation}
where $C$ depends only upon $n$.

To prove \eqref{dxmb} we must obtain an expression similar to \eqref{SfAf} with the integrals $\int(\partial_x^jf)^2$ replaced by $\int((\partial_x+B)^jf)^2$.
To do that, using integration by parts,  we observe that for $m=1,\cdots,n-1$, 
\begin{align}
\int((\partial_x+B)^mf)^2&=\int (\partial_x^mf)^2+\sum_{j=0}^{m-1}\sum_{\substack{r,s\geq 0\\r+2s+2j=2m}}c_{m,s,j}\int B^rB_x^s(\partial_x^jf)^2\notag\\
&=\int (\partial_x^mf)^2+\sum_{j=0}^{m-1}\sum_{\substack{r,s\geq0\\r+2s+2j=2m}}c_{m,s,j} \frac{(2\alpha)^{r+s}}{R^{r+2s}}\int\varphi^r(\partial_x^jf)^2\notag\\
&=\int (\partial_x^mf)^2+\sum_{j=0}^{m-1}\sum_{s=0}^{m-j}c_{m,s,j}\frac{(2\alpha)^{2m-2j-s}}{R^{2m-2j}}\int\varphi^{2m-2s-2j}(\partial_x^jf)^2\,.
\label{cambio}
\end{align}
Since $\alpha>\overline CR^{n/(n-1)}>1$, and in the support of $f$, $1\leq\varphi\leq5$,  from \eqref{cambio} it follows that
\begin{equation}\int((\partial_x+B)^mf)^2\leq \int (\partial_x^mf)^2+C' \sum_{j=0}^{m-1}\frac{\alpha^{2m-2j}}{R^{2m-2j}}\int(\partial^j_xf)^2,
\label{ecu9}\end{equation}
where $C'>0$ depends only on $n$.
Now, from  \eqref{SfAf} we see that, if $K_{n-1}$ is a constant with $0<K_{n-1}\leq A^n_{n-1}=A^n_0=n$, then
\begin{equation*}
\|Tf\|^2\geq CK_{n-1}\frac{\alpha}{R^2} \int(\partial^{n-1}_xf)^2+ C\sum_{j=0}^{n-2} A^n_j \frac{\alpha^{2n-2j-1}}{R^{2n-2j}}\int(\partial_x^jf)^2,
\end{equation*}
Thus, applying \eqref{ecu9} with $m=n-1$, we conclude that
\begin{align*}
\|Tf\|^2&\geq CK_{n-1}\frac{\alpha}{R^2}\left[ \int((\partial_x+B)^{n-1}f)^2- C' \sum_{j=0}^{n-2}\frac{\alpha^{2n-2-2j}}{R^{2n-2-2j}}
\int(\partial^j_xf)^2\right]
+C\sum_{j=0}^{n-2} A^n_j \frac{\alpha^{2n-2j-1}}{R^{2n-2j}}\int(\partial_x^jf)^2\\
&=CK_{n-1}\frac{\alpha}{R^2} \int((\partial_x+B)^{n-1}f)^2+C\sum_{j=0}^{n-2} (A^n_j -K_{n-1}C')\frac{\alpha^{2n-2j-1}}{R^{2n-2j}}\int(\partial_x^jf)^2.
\end{align*}
Therefore, by choosing $K_{n-1}=\text{min}\,\{A_{n-1}^n, \frac 12A_{n-2}^n/C',\cdots, \frac 12A_{0}^n/C'\}$, we obtain that 
\begin{equation}
\|Tf\|^2\geq C K_{n-1}\frac{\alpha}{R^2} \int((\partial_x+B)^{n-1}f)^2+C\sum_{j=0}^{n-2} \frac 12 A^n_j \frac{\alpha^{2n-2j-1}}{R^{2n-2j}}\int(\partial_x^jf)^2,\label{repla}\end{equation}
and in this way we obtain an expression similar to \eqref{SfAf} with the first integral term $\int(\partial^{n-1}_xf)^2$ replaced by $\int((\partial_x+B)^{n-1})f)^2$. Notice that from \eqref{comb1}, $K_{n-1}>0$. Proceeding in a similar manner, using \eqref{repla}, succesively applying \eqref{ecu9} with $m=n-2,n-3,\cdots, 1$, and taking adequate values of $K_{n-2},\cdots, K_1>0$, we can perform the replacement of the remaining integrals. Since the $\min\{K_1,\cdots,K_{n-1}\}>0$,  \eqref{dxmb} follows and the  proof of  Lemma \ref{lema3} is complete.
 \end{proof}
 
 We now prove that all the coefficients $A_j^n$ defined in \eqref{Asubnj2} are positive.
  \begin{lemma}\label{lemaaj}
 For integer $k\geq 1$, let $n=2k+1$. Then, for  $j=0,\cdots,n-1$,
\begin{equation}
  A^n_j:=(-1)^{j}\sum_{\substack{k_1+k_2=j\\ 0\leq k_1,k_2\leq k}}
 (2k_2+1-2k_1){\binom n{2k_1}}{\binom n{ 2k_2+1}}=n{\binom{n-1}j}.\label{anjanj}
\end{equation}
\end{lemma}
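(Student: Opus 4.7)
The plan is to interpret $A^n_j$ as the coefficient of $x^{2j+1}$ in an explicit bilinear form built from $(1\pm x)^n$, and then to collapse that form in closed shape. First I will substitute $a=2k_1$ and $b=2k_2+1$; the summation defining $A^n_j$ then runs over pairs of nonnegative integers $(a,b)$ with $a$ even, $b$ odd and $a+b=2j+1$, the range restrictions $0\le k_1,k_2\le k$ being automatic since $n=2k+1$ forces $a\le n-1$ and $b\le n$. The natural generating functions are
\begin{equation*}
E(x):=\sum_{a\text{ even}}\binom{n}{a}x^a=\tfrac12\bigl[(1+x)^n+(1-x)^n\bigr],\qquad O(x):=\sum_{b\text{ odd}}\binom{n}{b}x^b=\tfrac12\bigl[(1+x)^n-(1-x)^n\bigr],
\end{equation*}
so that $xE'(x)$ and $xO'(x)$ produce the factors $a\binom{n}{a}$ and $b\binom{n}{b}$ inside the series. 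By the Cauchy product,
\begin{equation*}
E(x)\,xO'(x)-xE'(x)\,O(x)=\sum_{m\ge 0}\Bigl[\sum_{\substack{a+b=m\\a\text{ even},\,b\text{ odd}}}(b-a)\binom{n}{a}\binom{n}{b}\Bigr]x^m,
\end{equation*}
and the bracketed quantity at $m=2j+1$ is precisely $(-1)^jA^n_j$ (recall the definition in \eqref{Asubnj2}).

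Next I will evaluate the left hand side in closed form. Writing $P=(1+x)^n$, $Q=(1-x)^n$, so that $P'=n(1+x)^{n-1}$ and $Q'=-n(1-x)^{n-1}$, the four products arising in $4(EO'-E'O)$ contain $PP'$ and $QQ'$ with opposite signs and these cancel. What remains is
\begin{equation*}
E(x)O'(x)-E'(x)O(x)=\tfrac{n}{2}\bigl[(1+x)^n(1-x)^{n-1}+(1+x)^{n-1}(1-x)^n\bigr]=n(1-x^2)^{n-1},
\end{equation*}
after factoring $(1+x)^{n-1}(1-x)^{n-1}$ and using $(1+x)+(1-x)=2$. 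Multiplying by $x$ yields
\begin{equation*}
E(x)\,xO'(x)-xE'(x)\,O(x)=xn(1-x^2)^{n-1}=n\sum_{j\ge 0}(-1)^j\binom{n-1}{j}x^{2j+1}.
\end{equation*}
Matching coefficients of $x^{2j+1}$ on both sides gives $(-1)^jA^n_j=(-1)^jn\binom{n-1}{j}$, hence $A^n_j=n\binom{n-1}{j}$, which is strictly positive for all $0\le j\le n-1$ as asserted in \eqref{comb1}.

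The only real obstacle is spotting the right bilinear combination, namely $E\cdot(xO')-(xE')\cdot O$; once written down, its collapse to $xn(1-x^2)^{n-1}$ follows from the elementary cancellation of the $PP'$ and $QQ'$ terms and the rest is bookkeeping. I considered induction on $j$ or on $n$, but the alternating factor $2k_2+1-2k_1$ and the constrained range of $k_1,k_2$ do not telescope in a useful way, so the generating-function approach seems decisively cleaner.
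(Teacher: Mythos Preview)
Your proof is correct and follows essentially the same generating-function route as the paper: both split $(1\pm x)^n$ into its even and odd parts $E$ and $O$, introduce the weight $b-a=2k_2+1-2k_1$ via differentiation, and collapse the result to $n(1-x^2)^{n-1}$ before matching coefficients. The only mechanical difference is that the paper packages the construction as a two-variable function $h_\beta(x)=E(\beta x)\,O(\beta/x)$ and differentiates in $x$ at $x=1$ to produce the factor $2k_1-2k_2-1$, whereas you apply the Euler operator $x\,d/dx$ to each factor separately; your version is arguably the more transparent of the two.
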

 \begin{proof}
 Using the formula $(1+x)^n=\sum_{r=0}^n{\binom nr}x^r$ we see that for $x,\beta\in\mathbb R$, $x\not=0$, 
 \begin{align*}h_\beta(x):&=\frac14\Bigl((1+\beta x)^n+(1-\beta x)^n\Bigr)\Bigl((1+\beta/ x)^n-(1-\beta/ x)^n\Bigr)\\
 &=\sum_{k_1,k_2=0}^k
 {\textstyle{\binom n{2k_1}}}{\textstyle{\binom n{2k_2+1}}}\beta^{2k_1+2k_2+1}x^{2k_1-2k_2-1}\\&=\sum_{j=0}^{2k}\sum_{\substack{k_1+k_2=j\\ 0\leq k_1,k_2\leq k}}{\textstyle{\binom n{2k_1}}{\binom n{2k_2+1}}}\beta^{2j+1}x^{2k_1-2k_2-1}.
 \end{align*}
Therefore
 \begin{equation}h'_\beta(1)=\sum_{j=0}^{2k}\beta^{2j+1}\kern-10pt\sum_{\substack{k_1+k_2=j\\ 0\leq k_1,k_2\leq k}}\kern-10pt(2k_1-2k_2-1){\textstyle{\binom n{2k_1}}{\binom n{2k_2+1}}}=\sum_{j=0}^{2k}(-1)^{j+1}A_j^n\beta^{2j+1}.\label{expaj}
\end{equation}
But
\begin{align*}
\frac4{n\beta}h'_\beta(x)&= \Bigl( (1+\beta x)^{n-1}-(1-\beta x)^{n-1}\Bigr)\Bigl( (1+\beta/x)^n-(1-\beta/x)^n\Bigr)\\
&+\Bigl( (1+\beta x)^n+(1-\beta x)^n   \Bigr) \Bigl((1+\beta/x)^{n-1}+(1-\beta/x)^{n-1}   \Bigr)\Bigl(-\frac 1{x^2}\Bigr).
\end{align*}
Therefore, with $x=1$,
\begin{align*}
\frac4{n\beta}h'_\beta(1)&= \Bigl( (1+\beta )^{n-1}-(1-\beta )^{n-1}\Bigr)\Bigl( (1+\beta)^n-(1-\beta)^n\Bigr)\\
&-\Bigl( (1+\beta )^n+(1-\beta )^n   \Bigr) \Bigl((1+\beta)^{n-1}+(1-\beta)^{n-1}   \Bigr)\\
&= -2(1+\beta)^{n-1}(1-\beta)^n-2(1+\beta)^n(1-\beta)^{n-1}=-4(1-\beta^2)^{n-1}.
\end{align*}
In this way,
\begin{equation*}
h'_\beta(1)=\sum_{j=0}^{n-1}(-1)^{j+1}n{\binom{n-1}j}\beta^{2j+1},
\end{equation*}
which, with together with \eqref{expaj}, gives \eqref{anjanj}. \end{proof}
 In the following
 theorem we apply Lemma \ref{lema3} to the difference $w$ of two solutions of equation \eqref{ec} to  obtain a bound of the $L^2$-norm of $w$ in a small rectangule with the $H^{n-1}$ norm of $w$ in a distant rectangle $[R-1,R]\times[0,1]$.

 \begin{Theorem}\label{estinferior}
 Let $\e>0$. For $r\in(0,1)$, $R>2$ and $u_1,u_2\in C([0,1];H^n(\mathbb R))$ solutions of equation \eqref{ec} define $w=u_1-u_2$, $Q=[0,1]\times[r,1-r]$, and
 \begin{equation}
 A_R(w)=\int_0^1\int_{R}^{R+1 }\sum_{j=0}^{n-1}(\partial_x^jw)^2\,dx\,dt\,.\label{Aw}
 \end{equation}
 Suppose that $\|w\|_{L^2(Q)}\geq\delta>0$. Then there exist constants $C>0$,   $C_*=C_*(n,r)>0$, and $R_0>1$ such that

 \begin{equation}
 \|w\|_{L^2(Q)}\leq Ce^{C_* R^\gamma}A_R(w)\, \quad\text{for all }R>R_0\,,\label{hd}
 \end{equation}
 where, 
 \begin{equation}
 \gamma\equiv\gamma_{n,p}=\begin{cases} \frac{n}{n-1}&\text{if }\,p=0,\cdots,k=\frac{n-1}2,\\
                                                  \\                     \frac{2(n-p)}{2(n-p)-1}+\epsilon&\text{if }\,p=k+1,\cdots,n-1.
                                                 \end{cases}\label{inf12}
                                                 \end{equation}
 
 \end{Theorem}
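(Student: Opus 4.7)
The plan is to apply the Carleman-type estimate of Lemma \ref{lema3} to a truncation $g=\eta w$ of $w$, where $\eta$ will be constructed so that (i) $\eta=1$ on $Q$, (ii) $\operatorname{supp} g\subset A_{1,5}$, and (iii) the cutoff commutators $(\partial_t\eta)w$ and $[\partial_x^n,\eta]w$ are supported either in the strip $[R,R+1]\times[0,1]$ (so that they can be controlled by $A_R(w)^{1/2}$) or in a region where the weight $e^{\psi_\alpha}$ is strictly smaller than it is on $Q$.

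Concretely, I would pick $\phi\in C^\infty([0,1])$ with $\phi(0)=\phi(1)=0$, $0\leq\phi\leq\phi_{\max}$, and $\phi\geq\phi_{\min}$ on $[r,1-r]$, where $2<\phi_{\min}<\phi_{\max}<4-1/R$ are chosen so that $\phi_{\min}^2>4$. Let $\mu\in C^\infty(\R)$ be $0$ for $y\leq 1$ and $1$ for $y\geq 2$, let $\nu\in C^\infty(\R)$ be $1$ for $x\leq R$ and $0$ for $x\geq R+1$, and let $\tau\in C^\infty([0,1])$ be $0$ at $t=0,1$ and $1$ on $[r/2,1-r/2]$. Set $\eta(x,t)=\tau(t)\mu(x/R+\phi(t))\nu(x)$ and $g=\eta w$. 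Then $g$ satisfies the hypotheses of Lemma \ref{lema3}, $g=w$ on $Q$, and from $(\partial_t+(-1)^{k+1}\partial_x^n)g=-\eta(P(z_1)-P(z_2))+(\partial_t\eta)w+(-1)^{k+1}[\partial_x^n,\eta]w$ the cutoff errors split into an outer piece coming from $\tau'$ and $\nu'$, supported in $[R,R+1]\times[0,1]$ where $\psi_\alpha\leq C_1\alpha$ with $C_1=(1+1/R+\phi_{\max})^2$, and an inner piece from $\mu'$, supported where $x/R+\phi(t)\in[1,2]$ and $\psi_\alpha\leq 4\alpha$.

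For the nonlinearity I will telescope $P(z_1)-P(z_2)=\sum_{i=0}^p a_i(x,t)\partial_x^i w$ as in \eqref{Ad}, with coefficients $a_i$ uniformly bounded on $D$ by the Sobolev embedding $H^{n+1}\hookrightarrow C^{n}$. Each term $\|e^{\psi_\alpha}\eta a_i\partial_x^i w\|$ is rewritten as $\|e^{\psi_\alpha}\partial_x^i g\|$ modulo further outer/inner commutators, and absorbed into the $i$-th summand on the LHS of \eqref{quince}, whose coefficient is $\alpha^{n-i-1/2}/R^{n-i}$. This absorption succeeds when $\alpha\gg R^{2(n-i)/(2(n-i)-1)}$, the tightest constraint arising at $i=p$. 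For $p\leq k$ that constraint is weaker than the standing hypothesis $\alpha\geq\overline C R^{n/(n-1)}$ of Lemma \ref{lema3}, so we may take $\gamma=n/(n-1)$; for $p\geq k+1$ we must instead take $\alpha=\alpha_0 R^{2(n-p)/(2(n-p)-1)+\epsilon}$, the $+\epsilon$ guaranteeing strict absorption and producing the stated $\gamma$.

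Combining the lower bound $(\alpha^{n-1/2}/R^n)e^{\phi_{\min}^2\alpha}\|w\|_{L^2(Q)}$ for the LHS of \eqref{quince} with the above estimates yields
\[
\|w\|_{L^2(Q)}\leq C\,e^{(C_1-\phi_{\min}^2)\alpha}A_R(w)^{1/2}+C\,e^{(4-\phi_{\min}^2)\alpha},
\]
where the last summand uses the global $L^\infty_T H^{n-1}$ bound on $w$ to estimate the inner error. Since $\phi_{\min}^2>4$, the last term decays exponentially in $R$; choosing $R\geq R_0(n,r,\delta)$ and invoking the hypothesis $\|w\|_{L^2(Q)}\geq\delta$, it can be absorbed into the left-hand side. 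Setting $\alpha=\alpha_0 R^\gamma$, squaring the resulting bound $\|w\|_{L^2(Q)}\leq C e^{C_* R^\gamma/2}A_R(w)^{1/2}$ and dividing once more by $\|w\|_{L^2(Q)}\geq\delta$ gives \eqref{hd} with $C_*=C_*(n,r)$ (the $\delta$-dependence absorbed into $C$). The main obstacle will be the geometric balance of the cutoffs: the inner $\mu'$-commutator cannot be controlled by $A_R(w)$ (at middle times it lives near $x\approx(1.5-\phi(t))R$, far from $[R,R+1]$), so its weight must be strictly smaller than the weight on $Q$, forcing $\phi_{\min}>2$ together with $\phi_{\max}<4-1/R$ while still keeping $\eta=1$ on $Q$. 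For $p>k$, the higher-order derivatives produced by the nonlinearity demand a larger $\alpha$ than Lemma \ref{lema3}'s natural scale, which is precisely what forces the $+\epsilon$ correction in $\gamma$.
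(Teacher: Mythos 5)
Your proposal is correct and follows essentially the same route as the paper: apply Lemma \ref{lema3} to a cutoff $g$ of $w$ built from a time-dependent inner cutoff $\mu(\frac{x}{R}+\phi(t))$ and an outer cutoff at $x\approx R$, absorb the nonlinear terms $F_j\partial_x^j g$ into the left-hand side of \eqref{quince} by taking $\alpha\sim R^{\gamma}$ with $\gamma$ exactly as in \eqref{inf12}, and use $\|w\|_{L^2(Q)}\geq\delta$ to absorb the inner commutator error whose weight ($e^{4\alpha}$) is strictly smaller than the weight on $Q$. Your variants (the explicit time cutoff $\tau$ guaranteeing $g(0)=g(1)=0$, the margin $\phi_{\max}<4-1/R$ keeping the support inside $A_{1,5}$, and tracking $A_R(w)^{1/2}$ before squaring) are minor refinements of the same argument.
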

 Notice that for the KdV Hierarchy, $p=n-2$,  and thus we have an exponential $e^{C_* R^{4/3_+}}$ in \eqref{hd}.
\begin{proof} 
From \eqref{ec}, by a procedure similar to that used to obtain \eqref{Ad}, it can be seen that
$w$ is a solution of the equation
\begin{equation}\partial_tw+(-1)^{k+1}\partial^n_xw=-[P(z_1)-P(z_2)]=-\sum_{j=0}^pF_j\partial_x^jw \,,\label{Fjj}
\end{equation}
where  each $F_j$ is a polynomial in $\partial_x^{j_1}u_1$ and $\partial_x^{j_2}u_2$ with $j_1,j_2\leq p\leq n-1$. From the embedding of $H^1(\mathbb R)$ in $L^\infty(\mathbb R)$,    the functions $F_j=F_j(x,t)$  are bounded  in  $D=\mathbb R\times[0,1]$.
Let $\phi\in C^\infty([0,1])$ be a function such that $\phi\equiv 0$ in $[0,\frac r2]\cup[1-\frac r2,1]$, $\phi\equiv 4$ in $[r,1-r]$, $\phi$ increasing in $[\frac r2,r]$, and decreasing in $[1-r, 1-\frac r2]$. Let us choose functions $\widetilde\mu,\, \widetilde\theta\in C^\infty(\mathbb R)$, such that $\widetilde\mu\equiv0$ in $(-\infty,1]$, $\widetilde\mu\equiv1$ in $[2,\infty)$,  $\widetilde\theta\equiv1$ in $(-\infty,0]$,  $\widetilde\theta\equiv0$ in $[1,\infty)$, and define $\mu_R(x,t)\equiv\mu(x,t):=\widetilde\mu(\frac xR+\phi(t))$ and $\theta_R(x)\equiv\theta(x):=\widetilde\theta(x-R)$.
Let $g:=\mu\theta w$. Then, it can be seen that in the support of $\mu\theta$,   $1\leq\frac xR+\phi(t)\leq5$. Besides $g(0)=g(1)=0$. Thus $g$ satisfies the hypotheses of Lemma \ref{lema3}. With $\psi=\alpha(\frac xR+\phi(t))^2$, $\alpha>0$,  as in the statement of Lemma \ref{lema3}, we compute
\begin{align*}
e^\psi(\partial_tg&+(-1)^{k+1}\partial^n_xg)= e^\psi\Bigl(\mu_t\theta w+\mu\theta\partial_t w+\mu\theta(-1)^{k+1}\partial^n_xw
+\sum_{r=1}^n c_{n,r}\partial_x^r(\mu\theta)\partial_x^{n-r}w\Bigr)\\
&=e^\psi\Bigl(-\sum_{j=0}^pF_j\mu\theta\partial^j_xw+\mu_t\theta w + \sum_{r=1}^nc_{n,r}\mu\partial_x^r\theta\partial_x^{n-r}w+\sum_{r=1}^n\sum_{s=1}^rc_{n,r}c_{r,s}\partial_x^s\mu\partial_x^{r-s}\theta\partial_x^{n-r}w\Bigr)\\
&=e^\psi\Bigl(-\sum_{j=0}^pF_j\partial_x^j(\mu\theta w)+\sum_{j=1}^pF_j\sum_{r=1}^jc_{j,r}\partial_x^r(\mu\theta)\partial_x^{j-r}w \\
&\quad\quad\quad+\mu_t\theta w+\mu \sum_{r=1}^nc_{n,r}\partial_x^r\theta\partial_x^{n-r}w+\sum_{r=1}^n\sum_{s=1}^rc_{n,r}c_{r,s}\partial_x^s\mu\partial_x^{r-s}\theta\partial_x^{n-r}w\Bigr)\\
&=  e^\psi\Bigl(-\sum_{j=0}^pF_j\partial_x^jg+\mu\sum_{j=1}^pF_j\sum_{r=1}^jc_{j,r}\partial_x^r\theta\partial_x^{j-r}w+\sum_{j=1}^p\sum_{r=1}^j\sum_{s=1}^rF_jc_{j,r}c_{r,s}\partial_x^s\mu\partial_x^{r-s}\theta\partial_x^{j-r}w\\
&\quad\quad\quad+\mu_t\theta w+\mu \sum_{r=1}^nc_{n,r}\partial_x^r\theta\partial_x^{n-r}w+\sum_{r=1}^n\sum_{s=1}^rc_{n,r}c_{r,s}\partial_x^s\mu\partial_x^{r-s}\theta\partial_x^{n-r}w\Bigr).\\
\end{align*}
To obtain a bound for the $L^2$-norm of the right-hand side of the former expression we take into account the following facts: The derivatives $\partial_x^s\mu$ ($s\geq 1$) are supported in $\subseteq
  \{(x,t)\mid 1<\frac xR+\phi(t)<2\}$ and  thus $ e^\psi\leq e^{4\alpha}$ in this set whose area is of order $R$. Also, $\partial_x^r\theta$ ($r\geq 1$)  is supported in $[R,R+1]\times[0,1]$, and  $e^\psi\leq e^{25\alpha}$  in this rectangle. Besides, $w$, the functions $F_j$, and all the derivatives of $\mu$, $\theta$, and $w$,  are bounded by a constant independent of $R$.   From this considerations, and applying Lemma \ref{lema3} we obtain that
\begin{align}
\sum_{j=0}^{n-1}\frac{\alpha^{n-j-1/2}}{R^{n-j}}\|e^\psi\partial_x^jg\|&\leq  C\|e^\psi(\partial_t+(-1)^{k+1}\partial_x^n)g\|\notag\\
&\leq C\sum_{j=0}^p\|e^\psi\partial_x^jg\|+Ce^{25\alpha}\sum_{j=0}^{n-1}\|\partial^j_xw\|_{L^2([R,R+1]\times[0,1])}
+CR^{1/2}e^{4\alpha}.\label{inf9}\end{align}
Let $\overline C$ be the constant in Lemma \ref{lema3}. Then $\overline C=\overline C(n,r)$. If we take $\alpha=(1+\overline C)R^{1+s}$, with $s\geq\frac1{n-1}$, then 
$\alpha>\overline CR^{\frac {n}{n-1}}$, and for $j=1,\cdots,p$
$$ 
\frac{\alpha^{n-j-1/2}}{R^{n-j}}=(1+\overline C)^{n-j-\frac12}R^{-\frac12+s(n-j-\frac12)}\geq R^{s(n-p-\frac12)-\frac12} $$

and therefore, after discarding the terms with $j>p$ on the left-hand side of \eqref{inf9}, and bearing in mind the definition of $A_R(w)$ given in \eqref{Aw}, we have that
 \begin{equation}
\sum_{j=0}^{p}R^{s(n-p-\frac12)-\frac12}\|e^\psi\partial_x^jg\|\leq C\sum_{j=0}^p\|e^\psi\partial_x^jg\|+Ce^{25\alpha}A_R(w)+CR^{1/2}e^{4\alpha}.\label{inf10}\end{equation}
We will choose $s\geq \frac1{n-1}$ in such a way that $s(n-p-1/2)-1/2>0$, that is, $s>1/(2n-2p-1)$. Then, by making $R$ sufficiently large we can make the left-hand side of \eqref{inf10} more than twice the first term on the right-hand side, allowing the absortion of this last term to obtain that 
\begin{equation} \sum_{j=0}^{p}R^{s(n-p-\frac12)-\frac12}\|e^\psi\partial_x^jg\|\leq Ce^{25\alpha}A_R(w)+CR^{1/2}e^{4\alpha}.\label{inf4}\end{equation}

 To choose the appropriate value of $s$ we see that if $p\leq{(n-1)}/2=k$, then $$\frac1{(2n-2p-1)}\leq \frac1n<\frac1{n-1},$$ and we choose $s=1/(n-1)$. Then with $\alpha=(1+\overline C)R^{1+s}=(1+\overline C)R^{(n-1)/n}$, we have \eqref{inf4} for large $R$.
 
 If $(n-1)/2\leq  p\leq n-1$, that is if $ {(n+1)}/2\leq p\leq n-1$, then, with $\epsilon>0$ and $s=1/(2n-2p-1)+\epsilon$, that is, with $\alpha=(1+\overline C)R^{\frac{2(n-p)}{2(n-p)-1}+\epsilon}$, we obtain \eqref{inf4} for large $R$.
  
 Since $\frac xR+\phi(t)\geq 4$ in $Q:=[0,1]\times[r,1-r]$ and $\mu\equiv1$, and $\theta\equiv1$ in $Q$, we can replace the left-hand side of \eqref{inf4} by a smaller amount to conclude that for $R$ sufficiently large
\begin{equation} e^{16\alpha}\|w\|_{L^2(Q)}\leq Ce^{25\alpha}A_R(w)+CR^{1/2}e^{4\alpha},\label{inf11}\end{equation} 
Hence, with $\gamma$ as in \eqref{inf12}, and $\alpha=(1+\overline C)R^\gamma$,
\begin{equation*} e^{16(1+\overline C)R^\gamma}\|w\|_{L^2(Q)}\leq Ce^{25(1+\overline C)R^\gamma}A_R(w)+CR^{1/2}e^{4(1+\overline C)R^\gamma},\end{equation*} 
 Since $\|w\|_{L^2(Q)}\geq\delta>0$, by making $R$ sufficiently large we can absorb the last term on the right-hand side of the former inequality with the left-hand side to obtain \eqref{hd} with $C_*=9(1+\overline C)$, which completes the proof of Theorem \ref{estinferior}.
 \end{proof}
 \section{Proofs of Theorem I and Theorem II}\label{sectionmain}

 For Theorem I we present a proof  which can be adapted with minor changes to prove Theorem II.
 
 {\it Proof of Theorem I.}
 
 Since from hypothesis \eqref{w},  $e^{x_+^{4/3+\epsilon}}w(0) \in L^2(\mathbb R)$,  it follows that $\|e^{ax_+^{4/3+\epsilon/2}}w(0)\|_{\ldos}\leq C_a<\infty$ for all $a>0$. The same property holds for $w(1)$. Also, by an interpolation argument similar to that in \eqref{inter} 
 \begin{equation}
 \| e^{ax_+^{4/3+\epsilon/2}}\partial^j_xw(i)\|_{\ldos}\leq C_a<\infty \quad \text{ for all } a>0, \quad j=1,\cdots,n,\quad i=0,1.\label{aa}
 \end{equation}
 Suppose that $w$ does not vanish identically in $D:=\R\times[0,1]$.  Then,  there is a rectangle $Q:=[x_0,x_0+1]\times[r,1-r]$, for some $x_0\in\mathbb R$ and $r\in(0,1)$, such that $\|w\|_{L^2(Q)}>0$. If we consider  translations $\tilde u_i$ of $u_i$, defined by $\tilde u_i(x,t):=u_i(x+x_0,t)$, $i=1,2$, then, it can be seen that $\tilde u_1$, $\tilde u_2$, and $\tilde w:= \tilde u_1-\tilde u_2$, satisfy the hypotheses of Theorem I. In this way, making a translation if necessary, we can suppose without loss of generality that  $Q=[0,1]\times[r,1-r]$.
 
 Let $\eta\in C^\infty(\R)$ be a function supported in $(0,1)$ and such that  $\int\eta=1$. For $R>1$ and $N>4R+1$, define $\phi_{R,N}(x)\equiv\phi(x):=\int_{-\infty}^x\eta(x'-R)-\eta(x'-N))\,dx'$. Then $\phi_{R,N}=1$ in $[R+1,N]$, $supp\,\phi_{R,N}\subseteq [R,N+1]$ and $|\phi^{(j)}_{R,N}|\leq c_j$ with $c_j$ independent of $R$ and $N$. We will apply Theorem \ref{car} to the function $v_{R,N}\equiv v:=\phi_{R,N}\,w$. From \eqref{Fjj}  with $p=n-2$, $v$ satisfies
 \begin{align}
 \partial_tv+(-1)^{k+1}\partial_x^nv
 &=\phi(\partial_tw+(-1)^{k+1}\partial^n_xw)+\sum_{j=1}^nc_{n,j}\phi^{(j)}\partial_x^{n-j}w\notag\\
 &=-\sum_{j=0}^p\phi F_j\partial_x^jw+\sum_{j=1}^nc_{n,j}\phi^{(j)}\partial_x^{n-j} w\notag\\
 &\quad=-\sum_{j=0}^pF_j\partial_x^j(\phi  w)+\sum_{j=1}^pF_j\sum_{r=1}^j c_{j,r}\phi^{(r)}\partial^{j-r}_xw+\sum_{j=1}^nc_{n,j}\phi^{(j)}\partial_x^{n-j} w.\label{t1}
  \end{align}

For $\lambda>2$ we now apply together \eqref{car1} and \eqref{car2} in Theorem \ref{car} to $v$. We use \eqref{t1}, H\"older's inequailty, and the fact that $\|\cdot\|\ldTldx\leq\|\cdot\|\liTldx$ and $\|\cdot\|\luTldx\leq\|\cdot\|\ldTldx$, and  take into account that $\phi$ is supported in $[R,N+1]$ and its derivatives $\phi^{(j)}$ are supported in $[R,R+1]\cup[N,N+1]$,  to obtain that
 \begin{align}
 \|\el\phi w\|\ldTldx +&\sum_{j=1}^{n-1}\|e^{\lambda x}\partial_x^j(\phi w)\|\lixldT\notag\\
 &\leq C\lambda^{n-1}\| |J^{n}(\el\phi w(1)) | +|J^{n}(\el\phi w(0))|\|_\ldos+C\|e^{\lambda x}(\partial_t+(-1)^{k+1}\partial_x^n)v\|\luTldx{}_\cap{}\luxldT\notag\\
 &\leq C\lambda^{2n-1}\sum_{j=0}^1\|\el(|w(j)|+|\partial^n_xw(j)|)\|_{L^2([R,\infty))}\tag{I}\label{Ia}\\
 &+C\|F_0\|_{L^2_TL^\infty_{x\geq R}}\|\el\phi w\|\ldTldx+C\|F_0\|_{L^2_{x\geq R}L^\infty_T}\|\el\phi w\|\ldxldT\label{IIa}\tag{II}
 \\&+C\sum_{j=1}^p\|F_j\|_{L^2_{x\geq R}L^\infty_T}\|\el\partial_x^j(\phi w)\|\lixldT
 +C\sum_{j=1}^p\|F_j\|_{L^1_{x\geq R}L^\infty_T}\|\el\partial_x^j(\phi w)\|\lixldT\label{IIIa}\tag{III}\\&
 +{C}e^{\lambda (R+1)}\sum_{j=0}^{n-1}\|\partial_x^jw\|\liTldx+ {C}\sum_{j=0}^{n-1}e^{\lambda{(N+1)}}\|\partial_x^jw\|_{L^\infty_TL^2_{x\geq N}}\tag{IV}\label{IVa}\\&=\text{I}+\text{II}+\text{III}+\text{IV}.
 \label{F0}
\end{align}
 
 where ${C}$  does not depend on $\lambda$, $N$, and $R$.
 
Taking into account the specific form of the polynomial $P$ in \eqref{poli1},  since $\sum_{j=0}^{n-1}F_j\partial_x^jw=P(z_1)-P(z_2)$, we see from  \eqref{P}, and \eqref{Ad} that
each $F_j$ is a polynomial in $\partial_x^{j_1}u_1$ and $\partial_x^{j_2}u_2$ with $j_1,j_2\leq n-3$, except for $F_0$ which has a term $\partial_x^{n-2}u_1$ comming from the quadratic term $w\partial_x^{n-2}u_1$ in  $A_2(z_1)-A_2(z_2)$ (see \eqref{Idos}). 
 From \eqref{alpfa1} it follows that
\begin{equation}\|(1+x_+)^{ \alpha(1-\frac {l+1}{n+1})}\partial_x^l u_i(t)\|_{L^\infty(\mathbb R)}\leq C\,,\quad\text{for all }t\in[0,1] \;\label{interinter}\text{and }l=0,\cdots,n.
\end{equation} 
For $l\leq n-3$, $\alpha(1-\frac {j+1}{n+1})>\frac{n+1}3(1-\frac{n-2}{n+1})=1$. 
Therefore $\|(1+x_+)^{1^+}F_j\|_{L^\infty_TL^\infty_x}<\infty$ for $j=1,\cdots,p$. In a similar way, with $l=n-2$ in \eqref{interinter}, we see that  $\|(1+x_+)^{2/3}F_0\|_{L^\infty_TL^\infty_x}<\infty$. From this decay of the functions $F_j$ we conclude that, by taking $R$ sufficiently large, the norms involving these functions in  II and III  of \eqref{F0} can be made small in such a way that II and III can be absorved by the left-hand side of \eqref{F0}.

After we perform this absortion, and taking into account that  $\phi\equiv 1$ in $[4R,4R+1]$,  we replace the left-hand side of \eqref{F0} by a smaller amount to obtain that
\begin{align}
 \|e^{ \lambda x}w\|_{L^2([4R,4R+1]\times[0,1])} +&\sum_{j=1}^{n-1}\|e^{\lambda x}\partial_x^jw\|_{L^2([4R,4R+1]\times[0,1])}\notag\\
 &\leq C\lambda^{2n-1}\sum_{j=0}^1\|\el(|w(j)|+|\partial^n_xw(j))|\|_{L^2([R,\infty))}\tag{I}\\
 &
 + {C}e^{\lambda (R+1)}\sum_{j=0}^{n-1}\|\partial_x^jw\|\liTldx+  {C}e^{\lambda{(N+1)}}\|\partial_x^jw\|_{L^\infty_TL^2_{x\geq N}}\tag{IV}\\&=\text{I}+\text{IV}.
 \label{F1}
\end{align}

From the decay hypothesis \eqref{w} of $w$  and the exponential decay preservation proved in Theorem \ref{decaimiento}, it follows that    $\|e^{\lambda x}w(t)\|_\ldos\leq C_\lambda<\infty$, for all $\lambda>0$ and all $t\in[0,1]$. From an interpolation argument similar to that in \eqref{inter}, we also have that  $\|e^{\lambda x}\partial_x^jw(t)\|_\ldos\leq C_\lambda$, for $j=1,\cdots,n$. Therefore, 

\begin{equation*}
\text{IV}\leq   {C}e^{\lambda (R+1)}\sum_{j=0}^{n-1}\|\partial_x^jw\|\liTldx + {C}e^{\lambda (N+1)}e^{-2\lambda N}\|e^{2\lambda x}\partial_x^jw\|_{L^\infty_TL^2_x}\leq   {C}e^{\lambda (R+1)}+C_\lambda e^{-\lambda(N-1)}\,,\label{IV}
\end{equation*}
Then, if we make $N\to\infty$, from \eqref{F1} we conclude that
\begin{equation}
e^{4R\lambda}\sum_{j=0}^{n-1}\|\partial_x^jw\|_{L^2([4R,4R+1]\times[0,1])}\leq C\lambda^{2n-1}\sum_{j=0}^1\|\el(|w(j)|+|\partial^n_xw(j))|\|_{L^2([R,\infty))}+{C}e^{\lambda (R+1)}.\label{4R}
\end{equation}
For $a>0$  to be determined later, we take $\lambda=aR^{1/3+\epsilon/2}$. Since   $\lambda x=aR^{1/3+\epsilon/2} x\leq ax^{4/3+\epsilon/2}$ for $x\geq R$, we have from \eqref{4R} that
\begin{align*}
e^{4aR^{4/3+\epsilon/2}}&\sum_{j=0}^{n-1}\|\partial_x^jw\|_{L^2([4R,4R+1]\times[0,1])}
\\ &\leq Ca^{2n-1}R^{(2n-1)(\frac13+\epsilon/2)}\sum_{j=0}^1\|e^{ax^{4/3+\epsilon/2}}(|w(j)|+|\partial^n_xw(j))|\|_{L^2(\R)}+{C}e^{aR^{1/3+\epsilon/2} (R+1)},\end{align*}
and thus, from the definition of $A_R(w)$ given in \eqref{Aw} and from \eqref{aa}
\begin{equation}
e^{4aR^{4/3+\epsilon/2}}A_R(w)\leq C_aR^{(2n-1)(1/3+\epsilon/2)}+{C}e^{2a R^{4/3+\epsilon/2}}\leq C_ae^{2a R^{4/3+\epsilon/2}}.
\label{5R}
\end{equation}
We now apply Theorem \ref{estinferior} with $p=n-2$ to obtain that
\begin{equation}
\|w\|_{L^2(Q)}\leq Ce^{C_*R^{4/3+\epsilon/2}}A_R(w) \leq Ce^{C_*R^{4/3+\epsilon/2}}C_ae^{-2aR^{4/3+\epsilon/2}}=C_ae^{(C_*-2a)R^{4/3+\epsilon/2}},
\end{equation}
Where $C^*=C^*(r)$. 
If we fix $a>\frac{C_*}2$, then by taking $R\to\infty$ we conclude that $\|w\|_{L^2(Q)}=0$, which contradicts the original assumption $\|w\|_{L^2(Q)}\not=0$. Then we conclude that $w\equiv0$, and Theorem I is proved.
\qed
\vskip10pt
{\it Proof of Theorem II}.

From Remark \ref{remdec}, $w$ satisfies \eqref{tres1}. With $\beta=1$, after applying Gronwall's inequality and taking $N\to\infty$, we can conclude that for $t_0\in[0,1]$ 
 \begin{equation} \int e^xw(t)^2\leq C\int e^xw(t_0)^2\quad \text{for all }t\in[t_0,1].
 \end{equation}
  By making the change of variables $x\mapsto-x$ and $t\mapsto1-t$, and taking into account that $w\in C([0,1]; H^{n+1}(\R)\cap L^2((1+x_-)^{2\alpha_0}\,dx)$ we can also see that
  \begin{equation} \int e^{-x}w(t)^2\leq C\int e^{-x}w(t_0)^2\quad \text{for all }t\in[0,t_0].
 \end{equation}
 Thus we can conclude that if $w(t_0)=0$, then $w\equiv0$.
 
 We will find a constant $a>0$ such that if $w(0), w(1)\in L^2(e^{ax^{n/(n-1)}})$, then $w\equiv 0$. We reason by contradiction. Suppose that $w$ does not vanish identically in $D:=\R\times[0,1]$. Then, by the uniqueness argument just given, $w$ does not vanish identically in $D_0:=\R\times[1/3,2/3]$. Therefore, there is a rectangle $Q:=[x_0,x_0+1]\times[1/3,2/3]$ such that $\|w\|_{L^2(Q)}>0$. By making a translation if necessary, we can suppose without loss of generality that  $Q=[0,1]\times[1/3,2/3]$.
 We now continue applying the same arguments used to prove Theorem I, using $\lambda=aR^{1/(n-1)}$ instead of $aR^{1/3+\epsilon/2}$.  In this case we apply Theorem \ref{estinferior} with $p\leq k$, and $C_*=C_*(1/3)$ and choose $a=\frac{C_*}2+1>\frac{C_*}2$, which gives a value of $a$ which depends only on $n$.

\end{document}